\newtheorem{thm}{Theorem}[section]
\newtheorem*{thm*}{Theorem}
\newtheorem{lem}[thm]{Lemma}
\newtheorem{cor}[thm]{Corollary}
\newtheorem{prop}[thm]{Proposition}
\newtheorem*{conjecture*}{Conjecture}
\theoremstyle{remark} 
\newtheorem*{question*}{Question}
\newtheorem{remark}[thm]{Remark}
\newtheorem{example}[thm]{Example}
\theoremstyle{definition}
\numberwithin{equation}{section}  
\newcommand{\OO}{\mathcal{O}}    
\newcommand{\RR}{\mathbb{R}}     
\newcommand{\PP}{\mathbb{P}}      
\newcommand{\CC}{\mathbb{C}}      
\newcommand{\be}{\begin{equation}}
\newcommand{\ee}{\end{equation}}
\newcommand{\benn}{\begin{equation*}}
\newcommand{\eenn}{\end{equation*}}
\newcommand{\ba}{\begin{aligned}}
\newcommand{\ea}{\end{aligned}}
\newcommand{\bbm}{\begin{bmatrix}}
\newcommand{\ebm}{\end{bmatrix}}
\newcommand{\bpm}{\begin{pmatrix}}
\newcommand{\epm}{\end{pmatrix}}
\newcommand{\bi}{\begin{itemize}}
\newcommand{\ei}{\end{itemize}}
\newcommand{\ord}{\operatorname{ord}}
\newcommand{\supp}{\operatorname{supp}}   
\newcommand{\Berk}{\mathbf{P}^1}  
\newcommand{\Hull}{\mathrm{Hull}}      
\newcommand{\Crit}{\mathrm{Crit}}     
\newcommand{\Ram}{\mathcal{R}}      
\newcommand{\PGL}{\mathrm{PGL}}    
\newcommand{\DD}{\mathcal{D}}     
\newcommand{\BB}{\mathcal{B}}    
\newcommand{\HH}{\mathbf{H}}     
\newcommand{\diam}{\mathrm{diam}}
\newcommand{\wronsk}{\mathrm{Wr}} 
\newtheorem*{fuzz*}{Fuzz Conjecture}
\newtheorem*{cic*}{Uniformity Conjecture}
\newtheorem*{thmD}{Theorem D}
\newtheorem*{thmE}{Theorem E}
\newtheorem*{thmF}{Theorem F}
\newtheorem{app}{Application}
\title
{Topology and Geometry of the Berkovich Ramification Locus for
	Rational Functions, II}
\author{Xander Faber \\
Department of Mathematics \\
University of Hawaii\\
Honolulu, HI  \\
xander@math.hawaii.edu
}
\date{}
\begin{document}

\maketitle

	\begin{abstract}
	    This article is the second installment in a series on the Berkovich ramification locus 
	    for nonconstant rational functions $\varphi \in k(z)$. 
	    Here we show the ramification locus is contained in a strong tubular neighborhood of finite radius
	    around the connected hull of the critical points of $\varphi$ if and only if $\varphi$ is tamely 
	    ramified. When the ground field $k$ has characteristic
	    zero, this bound may be chosen to depend only on the residue characteristic.  We give
	    two applications to classical non-Archimedean analysis, including a new version of 
	    Rolle's theorem for rational functions. 
		\textit{2010 Mathematics Subject Classification.} 14H05 (primary); 11S15 (secondary).
	\end{abstract}

\section{Introduction}
    Let $k$ be any algebraically closed field that is complete with respect to a nontrivial non-Archimedean absolute value, and let $\varphi \in k(z)$ be a nonconstant rational function, viewed as an endomorphism of the Berkovich projective line $\Berk$. The ramification locus, denoted $\Ram_\varphi$, is the set of points of $\Berk$ at which the map $\varphi: \Berk \to \Berk$ is not locally injective. In \cite{Faber_Berk_RamI_2011}, we gave a description of the topological features of $\Ram_\varphi$. For example, it was proved that the ramification locus has at most $\deg(\varphi) - 1$ connected components \cite[Thm.~A]{Faber_Berk_RamI_2011}, that its interior points for the strong topology may be characterized by a certain local inseparability property \cite[Thm.~B]{Faber_Berk_RamI_2011}, and that it is well-behaved under extension of scalars \cite[\S4]{Faber_Berk_RamI_2011}. 
    
    Here we discuss geometric properties of the ramification locus that were hinted at in the introduction of the last article. The ``Berkovich hyperbolic space'' $\HH = \Berk \smallsetminus \PP^1(k)$ is preserved by the usual action of $\PGL_2(k)$, and there is a natural invariant metric $\rho$ on $\HH$ characterized by $\rho(\zeta_{a,r}, \zeta_{a,R}) = \log_{q_k}( R / r)$ whenever $a \in k$ and $0 < r < R$. See \cite[\S2.2.3]{Faber_Berk_RamI_2011} for further details. The morphism $\varphi$ is locally $\rho$-expanding on its ramification locus, and it is locally a $\rho$-isometry on the complement. So it is desirable to have some sort of control over the size of the ramification locus. Our two main theorems provide exactly that.
    
    For the statements, we need the following notation and terminology. If $X \subset \Berk$ is a nonempty subset and $r \geq 0$ is a real number, we define 
	\[
		X+r = \{y \in \Berk : \rho(x,y) \leq r \text{ for some $x \in X$}\}.
	\]
We will usually take $X = \Hull(\Crit(\varphi))$, the \textbf{connected hull} of the critical points of $\varphi$. (It is the smallest connected set containing all of the critical points.) 

	
\begin{thmD}[Uniform Tubular Neighborhood in Characteristic Zero]
\label{Thm: Uniformity}
	Suppose $k$ has characteristic zero and residue characteristic $p \geq 0$. Let $\varphi \in k(z)$ be a 
	nonconstant rational function. Then 
		\[
			\Ram_\varphi \subset \Hull(\Crit(\varphi)) + \begin{cases}
					0 & \text{if $p = 0$ or $p > \deg(\varphi)$} \\
					\frac{1}{p-1} & \text{if $0 < p \leq \deg(\varphi)$}.
				\end{cases}
		\] 
\end{thmD}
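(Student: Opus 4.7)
The plan is to reduce the problem to a type II point $\zeta \in \Ram_\varphi$ lying outside $\Hull(\Crit(\varphi))$ and analyze the reduction $\tilde\varphi$ on the residue curve, combining Riemann--Hurwitz in the tame case with a $p$-adic Newton polygon estimate in the wild case. Type I points of $\Ram_\varphi$ are classical critical points, hence in $\Hull(\Crit(\varphi))$; and since the tubular neighborhood $\Hull(\Crit(\varphi)) + r$ is closed under $\rho$-limits, a density/approximation argument (using the structural description of $\Ram_\varphi$ from Part I) reduces the theorem to the case of a type II $\zeta \in \Ram_\varphi \setminus \Hull(\Crit(\varphi))$, for which I aim to bound $\rho(\zeta, \Hull(\Crit(\varphi)))$. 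After M\"obius changes of coordinates on source and target, normalize $\zeta = \varphi(\zeta) = \zeta_G$, so that tangent directions at $\zeta$ correspond to points of $\PP^1(\tilde k)$, the reduction $\tilde\varphi \in \tilde k(z)$ is a nonconstant rational function, and $m_\varphi(\zeta, \vec v)$ equals the local degree of $\tilde\varphi$ at the residue point corresponding to $\vec v$; in particular $\zeta \in \Ram_\varphi$ iff $\tilde\varphi$ is not M\"obius.

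Because $\Hull(\Crit(\varphi))$ is connected and $\zeta$ lies outside it, exactly one direction $\vec v_0$ at $\zeta$ points toward the hull, and all critical points of $\varphi$ lie in the open disk $B(\zeta, \vec v_0)^-$. A local branched-cover argument, via the directional multiplicity formalism from Part I, shows that any direction $\vec v$ with $m_\varphi(\zeta, \vec v) > 1$ contains a critical point of $\varphi$ in its disk; therefore $\tilde\varphi$ can be ramified on its source only at the point $\tilde P_0$ corresponding to $\vec v_0$. This also rules out $\tilde\varphi$ being inseparable, since an inseparable reduction ramifies every direction. Applying Riemann--Hurwitz to $\tilde\varphi$ of degree $d \leq \deg \varphi$ gives $2d - 2 = (e_{\tilde P_0} - 1) + \delta_{\tilde P_0} \leq (d - 1) + \delta_{\tilde P_0}$. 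When $p = 0$ or $p > \deg \varphi$ the reduction is automatically tame ($\delta_{\tilde P_0} = 0$), forcing $d = 1$ and contradicting $\zeta \in \Ram_\varphi$; this handles the case $r = 0$.

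The main obstacle is the quantitative bound in the wild case $0 < p \leq \deg \varphi$, where $\tilde\varphi$ must be totally wildly ramified at $\tilde P_0$. I need to show this wildness forces classical critical points of $\varphi$ within $\rho$-distance $\tfrac{1}{p-1}$ of $\zeta$ along $\vec v_0$. The Artin--Schreier model $\varphi(z) = z^p - z$ explains the threshold: its reduction at the Gauss point is wildly ramified at $\tilde\infty$, and its $p - 1$ affine critical points lie exactly on the $\rho$-sphere of radius $\tfrac{1}{p-1}$ about $\zeta_G$, making the inclusion sharp. To prove the bound in general I would expand $\varphi$ in a uniformizer adapted to $\vec v_0$, use the wild structure of $\tilde\varphi$ near $\tilde P_0$ to isolate the dominant terms of $\varphi$, and read off the Newton polygon of $\varphi'$ to locate its roots. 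The delicate step is verifying that the slope responsible for the $\tfrac{1}{p-1}$ bound persists uniformly, surviving contributions from any tame part of $\tilde\varphi$ or from the global pole/zero structure of $\varphi$.
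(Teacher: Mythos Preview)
Your tame case ($p=0$ or $p>\deg\varphi$) is essentially correct and matches the paper's reasoning in spirit: the reduction $\tilde\varphi$ is automatically separable, so Proposition~6.6 of Part~I gives your claim that only the direction $\vec v_0$ can have directional multiplicity $>1$, and Riemann--Hurwitz on $\tilde\varphi$ forces $d=1$.

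The wild case has a genuine gap. Your key claim---that any direction $\vec v$ with $m_\varphi(\zeta,\vec v)>1$ contains a critical point---is \emph{false} when $\tilde\varphi$ is inseparable, and you invoke it circularly to rule out inseparability. Concretely, take $\varphi(z)=z^2$ with residue characteristic $2$ and $\zeta=\zeta_{1,r}$ for $|2|<r<1$. Then $\zeta\in\Ram_\varphi\smallsetminus\Hull(\Crit(\varphi))$, the reduction at $\zeta$ is $\tilde w\mapsto\tilde w^2$ (inseparable), every direction has $m_\varphi(\zeta,\vec v)=2$, yet only the direction toward $\zeta_{0,1}$ contains a critical point. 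In fact Theorem~B of Part~I says strong interior points of $\Ram_\varphi$ are \emph{exactly} the points of inseparable reduction, so your setup fails at the generic point you need to handle. Consequently ``$\tilde\varphi$ is totally wildly ramified at $\tilde P_0$'' does not follow, and the Newton polygon sketch built on it lacks the hypotheses it needs.

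The paper proceeds differently. Rather than standing at $\zeta$ and looking inward, it stands at a type~II point $x\in\Hull(\Crit(\varphi))$ and bounds how far ramification can extend outward, via the \emph{visible ramification} $\tau_\varphi(x)$ built from the auxiliary polynomial $A_\varphi(z,w)$. Proposition~\ref{Prop: Ramification Bounded by Tau} shows the ramification in any non-critical direction at $x$ reaches $\rho$-distance at most $\tau_\varphi(x)$. The bound $\tau_\varphi(x)\le 1/(p-1)$ is then obtained by choosing a direction with no critical point and no pole and applying Robert's Rolle theorem for convergent power series (Proposition~\ref{Prop: Second Mapping Lemma}): absence of poles and critical points on $D(0,1)^-$ forces injectivity on $D(0,|p|^{1/(p-1)})^-$. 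Your Newton polygon idea for $\varphi'$ is pointed at exactly this lemma; if you want to salvage your approach, the fix is to drop the analysis of $\tilde\varphi$ at $\zeta$ and instead normalize so that the open unit disk (in the direction away from $\vec v_0$) has neither critical points nor poles, and then invoke Robert's estimate directly.
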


	After reading an earlier version of this article, Baldassarri investigated Theorem~D (and Application~1 below) using his methods on the radius of convergece of $p$-adic differential equations \cite{Baldassarri_convergence_2010, Baldassarri_Berk_Ram}. He defines a connection $(\varphi_* \OO_{\Berk}, \nabla)$ with the property that for any $y \in \Berk$, local sections of the morphism $\varphi: \Berk \to \Berk$ at $y$ form a $k$-basis of solutions to the system $(\varphi_* \OO_{\Berk}, \nabla)$. The ramification of $\varphi$ is precisely the obstruction to extending a given solution to a larger disk. While this technique does not recover the entirety of the theorem, Baldassarri is able to show that $\Ram_\varphi$ is contained in a tubular neighborhood of radius $1/(p-1)$ about $\Hull(Z)$, where $Z = \varphi^{-1}(\varphi(\Crit(\varphi)))$ is the saturation of $\Crit(\varphi)$ in $\Berk$. His result also applies in the more general context of finite morphisms of compact rig-smooth $k$-analytic curves. 

	Note that the bound for the tubular radius  in Theorem~D is independent of the rational function $\varphi$. The metric $\rho$ is normalized so that $\rho(\zeta_{0,1}, \zeta_{0, |p|^\alpha}) = \alpha$ for $\alpha \geq 0$, and the specific constant $1 / (p-1)$ is an artifact of this (natural) choice. When the characteristic of $k$ is positive, we are able to describe exactly when a bound of the above sort can exist. Recall that a rational function $\varphi \in k(z)$ is called \textbf{tamely ramified} if the characteristic of $k$ does not divide the multiplicity $m_\varphi(x)$ for any $x \in \PP^1(k)$.\footnote{Simply calling such rational functions ``tame'' is appealing, but Trucco has already reserved this term for a function whose ramification locus is contained in a finitely branched subtree of $\Berk$ \cite{Trucco_Tame_Polynomials_2012}.}

\begin{thmE}[Tubular Neighborhood in Positive Characteristic]
\label{Thm: Metric Properties}
	Suppose $k$ has positive characteristic, and let $\varphi \in k(z)$ be a nonconstant rational function. Then there exists a constant $r_\varphi \geq 0$ such that
		$
			\Ram_\varphi \subset \Hull(\Crit(\varphi)) + r_\varphi 
		$ 
if and only if $\varphi$ is tamely ramified.
\end{thmE}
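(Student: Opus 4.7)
Assuming $\varphi$ is tamely ramified, I would prove the stronger statement $\Ram_\varphi \subseteq \Hull(\Crit(\varphi))$, so that $r_\varphi = 0$ suffices. Let $y \in \HH \setminus \Hull(\Crit(\varphi))$. The tree structure of $\Berk$ gives a unique direction $\vec v_0$ at $y$ pointing toward $\Hull$, and every other direction $\vec v$ at $y$ has its branch disjoint from $\Crit(\varphi)$. For $y$ of type II, apply Riemann--Hurwitz to the induced residue map $\tilde \varphi_y : \PP^1_{\tilde k} \to \PP^1_{\tilde k}$, which has degree $m_\varphi(y)$. Under tameness, $m_\varphi(y,\vec v) - 1$ is accounted for by the classical critical points in the branch at $\vec v$, and hence vanishes for every $\vec v \neq \vec v_0$. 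The tame Riemann--Hurwitz identity $\sum_{\vec v}(m_\varphi(y,\vec v)-1) = 2m_\varphi(y)-2$ combined with the bound $m_\varphi(y,\vec v_0) \leq m_\varphi(y)$ then forces $m_\varphi(y) = 1$, so $y \notin \Ram_\varphi$. Type III and IV points outside $\Hull$ are dispatched by upper semicontinuity of $m_\varphi$ together with density of type II points.

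\textbf{Reverse direction.} Assume $p \mid m_\varphi(x_0)$ for some $x_0 \in \PP^1(k)$. Using $\PGL_2(k)$-invariance of $\Ram_\varphi$, $\Hull$, and $\rho$ (Faber I, \S4), normalize so $x_0 = 0$ and $\varphi(0) = 0$, giving $\varphi(z) = \sum_{i \geq m} c_i z^i$ with $m = m_\varphi(0) = p^a m'$, $a \geq 1$, $(m',p) = 1$, and $c_m \neq 0$. I exhibit, for each $R > 0$, a type II point $y = \zeta_{b,s} \in \Ram_\varphi$ with $\dist(y, \Hull) > R$. Pick $b \in k^\times$ whose residue class at the Gauss point $\zeta_{0,|b|}$ avoids the finitely many projections of other critical points; then $y = \zeta_{b,s} \notin \Hull$ for every $s < |b|$, and $\dist(y, \Hull) = \log_{q_k}(|b|/s)$. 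Expanding $\varphi(b+h) - \varphi(b) = \sum_{j \geq 1} A_j(b) h^j$ with $A_j(b) = \sum_{i \geq \max(m,j)} c_i \binom{i}{j} b^{i-j}$, Lucas's theorem identifies $j = p^a$ as the smallest positive index with $\binom{m}{j} \not\equiv 0 \pmod p$, and yields $\binom{m}{p^a} \equiv m' \pmod p$, a unit; hence $A_{p^a}(b)$ has non-vanishing leading contribution $c_m m' b^{m - p^a}$. Meanwhile the linear coefficient $A_1(b) = \varphi'(b)$ vanishes to high order at $b = 0$, since wildness kills the $c_m z^m$ contribution to the derivative. For $s$ in a range of the form $|b|^\alpha < s < |b|$ (with $\alpha$ slightly larger than $1$, determined by the lowest separable degree appearing in $\varphi$), the $h^{p^a}$ term strictly dominates the full expansion at scale $s$, making $\tilde\varphi_y$ a scalar multiple of $\tilde w^{p^a}$ — purely inseparable of degree $p^a \geq p$. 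By Theorem B of Faber I, $y$ lies in the interior of $\Ram_\varphi$. As $|b| \to 0$, the maximum permitted value $(\alpha - 1) \log_{q_k}(|b|^{-1})$ of $\dist(y, \Hull)$ grows without bound, so no finite $r_\varphi$ contains $\Ram_\varphi$ in $\Hull + r_\varphi$.

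\textbf{Main obstacle.} The delicate step is the Newton polygon analysis in the reverse direction: confirming that the inseparable term $c_m m' b^{m-p^a} h^{p^a}$ strictly dominates every other $A_j(b) h^j$ at the chosen scale $s$, especially contributions from the separable Taylor terms $c_i z^i$ with $p \nmid i$ and $i > m$ that feed into the linear coefficient $A_1(b) = \varphi'(b)$. In the cleanest case (where no such separable terms appear or $m_0 := \min\{i > m : p \nmid i,\, c_i \neq 0\}$ is just $m+1$) the exponent $\alpha = p^a/(p^a - 1)$ emerges naturally; its specialization $p/(p-1)$ when $a = 1$ mirrors the tubular radius $1/(p-1)$ of Theorem D, hinting at a common mechanism relating wild ramification to a scale of order $|p|^{1/(p-1)}$.
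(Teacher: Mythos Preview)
Your forward direction contains a genuine error: the claim that tame ramification at classical critical points forces $\Ram_\varphi \subseteq \Hull(\Crit(\varphi))$ is false in positive characteristic. Example~\ref{Ex: E not uniform} in the paper gives an explicit counterexample: for $p>2$ take $\psi(z)=z^d+z$ with $d>p$, $p\nmid d$, and set $\varphi(z)=\psi(z)+\alpha z^p$. Since $(\alpha z^p)'=0$, the critical points of $\varphi$ coincide with those of $\psi$ and are all tame; yet for $|\alpha|$ large the point $\zeta_{0,|\alpha|^{-1/(p-1)}}$ lies in $\Ram_\varphi$ while the nearest hull point is $\zeta_{0,1}$. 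The flaw in your argument is the appeal to ``tame Riemann--Hurwitz'' for the residue map $\tilde\varphi_y$: tameness of $\varphi$ at its \emph{classical} critical points does not prevent $\tilde\varphi_y$ from being inseparable at a type~II point $y$ off the hull. When $\tilde\varphi_y$ is inseparable, every directional multiplicity $m_\varphi(y,\vec v)$ is divisible by $p$, so the identity $\sum_{\vec v}(m_\varphi(y,\vec v)-1)=2m_\varphi(y)-2$ fails and your deduction that $m_\varphi(y,\vec v)=1$ for $\vec v\neq\vec v_0$ collapses. The paper handles the forward direction differently: it shows the visible ramification $\tau_\varphi$ is continuous on $\Hull(\Crit(\varphi))$ and has finite limit at each tame critical point (Corollary~\ref{Cor: Tau is bounded}), hence is bounded; Proposition~\ref{Prop: Ramification Bounded by Tau} then gives the tube.

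Your reverse direction is essentially correct and parallels the paper's computation in \S\ref{Sec: Local Fuzz} (the case $\mathrm{char}(k)=p$, $p\mid m$). The paper normalizes $\varphi$ as a power series $z^m(1+\varepsilon(z))$ near the wild critical point, computes the Newton polygon of $F(z)=f(z+\delta)-f(\delta)$, and finds a ramified point $\zeta_{\delta,r}$ with $\rho(\zeta_{\delta,r},\zeta_{0,|\delta|})\to\infty$ as $\delta\to 0$; your Lucas-theorem identification of the dominant $h^{p^a}$ term is a variant of the same mechanism. One small point: your claimed value $\binom{m}{p^a}\equiv m'\pmod p$ should read $\binom{m}{p^a}\equiv m'\bmod p\pmod p$ (the lowest base-$p$ digit of $m'$), though this is still a unit and the argument goes through.
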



	The ramification locus has finitely many connected components, each of which contains at least two critical points \cite[Thm.~A]{Faber_Berk_RamI_2011}. In light of Theorems~D and~E, the ramification locus may be viewed as lying inside a tubular neighborhood around a finite union of finitely branched trees, provided $\varphi$ is tamely ramified.

	



	As an application of Theorem~D, we can prove two interesting statements in classical non-Archimedean analysis. Write $D(a,r) = \{x \in k : |x - a| \leq r\}$ for the closed disk of radius $r \in \RR_{\geq 0}$ and center $a \in k$. If $p \geq 0$ is the residue characteristic of $k$ and $d \geq 1$ is an integer, we write $\gamma_{p,d} = 1$ if $p = 0$ or $p > d$ and $\gamma_{p,d} = |p|^{-1/ (p-1)} > 1$ otherwise.

\begin{app}[Non-Archimedean Rolle's Theorem for Rational Functions]
\label{App: Rolle's Theorem}
	Suppose $k$ has characteristic zero and residue characteristic $p \geq 0$. Let $\varphi \in k(z)$ be a rational function of degree $d\geq 1$. If $\varphi$ has two distinct zeros in the closed disk $D(a,r)$, then it has a critical point in the closed disk $D(a, r\cdot \gamma_{p,d})$.
\end{app}

    Robert proved a version of this result for power series that converge on the disk $D(a, r\cdot \gamma_{p,d})$ \cite[\S2.4]{Robert_p-adic_Book_2000}. A weak form of it is a key ingredient in our argument. 
In basic calculus, Rolle's Theorem is intimately connected with the mean value theorem. But while the literature contains several variations on the mean value theorem for non-Archimedean analytic functions (e.g., \cite{Rivera-Letelier_Injective_2002} or \cite[IV.3]{Robert_p-adic_Book_2000}),  it does not appear to be possible to formulate one in the present context due to the existence of rational functions that map a closed disk $D(a,r)$ over the entire projective line. Our second application asserts that there is a critical point nearby if this occurs.   

\begin{app}
\label{App: Exploding Disk}
	Suppose $k$ has characteristic zero and residue characteristic $p \geq 0$. Let $\varphi \in k(z)$ be a rational function of degree $d\geq 1$. If $\varphi(D(a,r)) = \PP^1(k)$, then there exists a critical point in the disk $D(a,  r \cdot\gamma_{p,d})$. 
\end{app}


    To prove all of the above results, we explicitly construct a function $\tau_\varphi: \HH \to \RR$ called the \textbf{visible ramification}. It is continuous for the strong topology, nonnegative, and it has the following important feature: for each $x \in \HH$ and each tangent direction $\vec{v} \in T_x$ such that $\BB_x(\vec{v})^-$ does not contain a critical point, we have 
    \[
    	\max_{\zeta \in \BB_x(\vec{v})^- \cap \Ram_\varphi} \rho(x, \zeta) \leq \tau_\varphi(x),
    \]
with equality for all but finitely many directions $\vec{v}$. Here we interpret the maximum to be zero if the ramification locus does not intersect $\BB_x(\vec{v})^-$. Theorem~D is a consequence of our study of the visible ramification and Robert's non-Archimedean Rolle's Theorem for power series. Theorem~E follows by a different argument once we know that $\tau_\varphi$ remains bounded in a weak neighborhood of each tame critical point. In fact, we completely determine the shape of the ramification locus near any critical point:

\begin{thmF}
\label{Thm: Local Fuzz}
	Let $k$ be a field with residue characteristic $p \geq 0$, let $\varphi \in k(z)$ be a nonconstant separable rational function, and let $c$ be a critical point of $\varphi$ with multiplicity $m = m_\varphi(c) > 1$. There exists an open Berkovich disk $U = U(\varphi, c)$ about $c$ such that
	\[
		U \cap \Ram_\varphi = U \cap \left[ \Hull(\Crit(\varphi)) + 
			\begin{cases} 
				0 & \text{if $p \nmid m$} \\
				\frac{1}{p-1} & \text{if $\mathrm{char}(k) = 0$ and $p \mid m$}
			\end{cases}\right].
	\]
If $\mathrm{char}(k) = p$ and $p \mid m$, then there exists no open Berkovich disk $U$ centered at $c$ such that $U \cap \Ram_\varphi$ is contained in a strong closed tube about $U \cap \Hull(\Crit(\varphi))$ with finite radius.
\end{thmF}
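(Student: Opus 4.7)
The plan is to reduce the problem to an explicit Newton-polygon computation by linearizing at $c$. First I would conjugate source and target by affine maps of $\Berk$ so that $c = 0$ and $\varphi(0) = 0$, and expand $\varphi(z) = az^m + a_{m+1}z^{m+1} + \cdots$ with $a \neq 0$. Next I would choose $\epsilon > 0$ small enough that $\{z \in k : |z| < \epsilon\}$ contains no other critical point of $\varphi$ and $|a_{m+i}|\epsilon^i < |a|$ for every $i \geq 1$; let $U$ be the corresponding open Berkovich disk about $c$. Since $\varphi$ has local degree $m > 1$ on every closed disk $D(0,t)$ with $t \leq \epsilon$, the geodesic $(c,\zeta_{0,\epsilon})$ lies in $\Ram_\varphi$ and also coincides with $U \cap \Hull(\Crit(\varphi))$. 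The problem therefore reduces to deciding, for each off-hull point $\zeta_{b,s}\in U$ (with $0 < s < |b| < \epsilon$), whether $\zeta_{b,s} \in \Ram_\varphi$.

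The key criterion is encoded in the Taylor expansion $\varphi(b+w) - \varphi(b) = c_1(b)w + c_2(b)w^2 + \cdots$: by a standard Newton-polygon argument, $\zeta_{b,s}$ lies in $\Ram_\varphi$ iff $|c_k(b)| s^k \geq |c_1(b)| s$ for some $k \geq 2$, which happens exactly when $s \geq s^*(b) := \min_{k \geq 2} \alpha_k(b)$, where $\alpha_k(b) := (|c_1(b)|/|c_k(b)|)^{1/(k-1)}$. By the choice of $\epsilon$, the Taylor coefficients are controlled to leading order by those of $az^m$: $|c_k(b)| = |a||\binom{m}{k}||b|^{m-k}$ for $1 \leq k \leq m$, giving $\alpha_k(b) = (|m|/|\binom{m}{k}|)^{1/(k-1)} |b|$.

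In the tame case $p \nmid m$, one has $|m| = 1$ and $|\binom{m}{k}| \leq 1$, so $\alpha_k(b) \geq |b|$ for all $k$ and no off-hull point is ramified, yielding $U \cap \Ram_\varphi = U \cap \Hull(\Crit(\varphi))$. For the wild case $\mathrm{char}(k) = 0$ with $p \mid m$, set $f(k) = (v_p(m) - v_p(\binom{m}{k}))/(k-1)$ so that $\alpha_k = |p|^{f(k)}|b|$. Then $p\binom{m}{p} = m\binom{m-1}{p-1}$ combined with Lucas's theorem (which gives $v_p(\binom{m-1}{p-1}) = 0$ whenever $p \mid m$) yields $f(p) = 1/(p-1)$; on the other hand the identity $v_p(k) + v_p(\binom{m}{k}) \geq v_p(m)$ gives $f(k) \leq v_p(k)/(k-1) \leq 1/(p-1)$ for all $k \geq 2$. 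Hence $\max_k f(k) = 1/(p-1)$, so $s^*(b) = |p|^{1/(p-1)}|b|$ and $\zeta_{b,s} \in \Ram_\varphi$ iff its distance to the hull is at most $1/(p-1)$, as claimed.

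Finally, when $\mathrm{char}(k) = p$ and $p \mid m$, separability of $\varphi$ forces some $j_0 > m$ with $p \nmid j_0$ and $a_{j_0} \neq 0$; then $\varphi'(b) = j_0 a_{j_0}b^{j_0 - 1} + O(|b|^{j_0})$, so $|c_1(b)| \sim |a_{j_0}||b|^{j_0-1}$ and $|c_m(b)| \sim |a|$ for small $|b|$. The condition $|c_m(b)| s^m \geq |c_1(b)| s$ produces a boundary ramified point at $s \sim |b|^{(j_0-1)/(m-1)}$, and since $(j_0-1)/(m-1) > 1$ the distance $\log_{q_k}(|b|/s)$ from this point to the hull tends to $+\infty$ as $|b| \to 0$. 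Thus no shrinking of $U$ can confine $\Ram_\varphi \cap U$ to a finite-radius tube around $\Hull(\Crit(\varphi))$. The main obstacle I expect is the Kummer/Legendre verification that $\max_k f(k) = 1/(p-1)$; the rest is direct Newton-polygon bookkeeping once the setup is in place.
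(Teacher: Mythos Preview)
Your approach is essentially the same as the paper's: reduce to $c=0$, write $\varphi$ locally as $az^m(1+\varepsilon(z))$, and read off the ramification from the Newton polygon of $\varphi(b+w)-\varphi(b)$. Your binomial computation (the identity $p\binom{m}{p}=m\binom{m-1}{p-1}$ plus $v_p(k)+v_p\binom{m}{k}\ge v_p(m)$) is exactly the paper's Lemma~4.1, and your treatment of the $\mathrm{char}(k)=p$, $p\mid m$ case matches the paper's use of the weight $m'$ (your $j_0$).

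There is one genuine technical gap. Your condition $|a_{m+i}|\epsilon^{i}<|a|$ does \emph{not} suffice to conclude $|c_k(b)|=|a|\,\bigl|\binom{m}{k}\bigr|\,|b|^{m-k}$ in the characteristic-zero wild case. The correction terms coming from $a_{m+i}z^{m+i}$ contribute to $c_k(b)$ something of size at most $|a_{m+i}|\,|b|^{m+i-k}<|a|\,|b|^{m-k}$, but when $p\mid m$ the target $|a|\,|\binom{m}{k}|\,|b|^{m-k}$ can be strictly smaller than $|a|\,|b|^{m-k}$ (indeed $|\binom{m}{p}|=|p|^{v_p(m)-1}$, and already $|m|<1$ so even $|c_1(b)|=|am|\,|b|^{m-1}$ is not guaranteed by your hypothesis). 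The paper handles this by shrinking the disk further, requiring in effect $|b|<\min_{0\le\ell\le m}|\binom{m}{\ell}|$, so that the ``error'' contribution $|\delta|^{m-t+1}$ is dominated by $|\binom{m}{t}\delta^{m-t}|$. You should impose the analogous strengthening, e.g.\ $|a_{m+i}|\epsilon^{i}<|a|\cdot\min_{1\le \ell\le m}|\binom{m}{\ell}|$. In the tame case your argument survives because there you only need the inequality $|c_k(b)|\le |a|\,|b|^{m-k}$ together with $|c_1(b)|=|a|\,|b|^{m-1}$ (and $|m|=1$), both of which do follow from your stated condition; but the wild characteristic-zero equality really needs the tighter bound.
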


	

    In Section~\ref{Sec: Aux Poly} we construct a two-variable auxiliary polynomial $A_\varphi(z,w)$ that essentially carries the same information as the Taylor series of $\varphi(z+w) - \varphi(w)$ centered at $z = 0$. We use this polynomial to define the visible ramification $\tau_\varphi$ and prove its key properties in Section~\ref{Sec: Visible}. This is accomplished via a series of lemmas that connect the algebraic data of $A_\varphi$ to the desired geometric interpretation of $\tau_\varphi$. In Section~\ref{Sec: Alternate} we a slightly different definition of the visible ramification that can be useful for computing the ramification locus. Section~\ref{Sec: Proofs} contains the proofs of Theorem~D and the two applications. Section~\ref{Sec: Local Fuzz} is reserved for computing the behavior of the visible ramification near a critical point. This amounts to an elaborate calculation involving comparisons of binomial coefficients. The proof of Theorem~E also appears there. We conclude with three examples in Section~\ref{Sec: Examples} that illustrate, among other things, that Theorems~D and~E cannot be improved.   
    
    Finally, we note that this article is a direct sequel to \cite{Faber_Berk_RamI_2011}, and we expect it to be read as a companion piece. In particular, we have endeavored to make Sections~2 and~3 of the prequel a rather encyclopedic reference for all of the necessary foundational facts on the Berkovich projective line and on multiplicities for rational functions. To keep the reader's work to a minimum, we will usually cite \cite{Faber_Berk_RamI_2011} when referring to these results; proper credit is assigned there.


\section{Visible Ramification}
\label{Sec: Auxiliary}

\noindent    \textbf{Convention 1.} Throughout this section, we assume that $\varphi \in k(z)$ is a separable nonconstant rational function with at least two critical points. (If $\varphi$ has only one critical point, then $k$ has positive characteristic and the unique critical point of $\varphi$ is wildly ramified.) \\
    
\noindent    \textbf{Convention 2.} We further assume that $\infty$ is a critical point of $\varphi$. Although it is not strictly necessary for what follows, it will simplify some of the discussion.  \\


\subsection{An Auxiliary Polynomial}
\label{Sec: Aux Poly}

	Let $\varphi \in k(z)$ be a nonconstant rational function, and let us write it as $\varphi = f / g$ for some polynomials $f, g \in k[z]$ with no common root. Recall that the \textbf{Wronskian} of $\varphi = f / g$ is defined as
	\[
		\wronsk_\varphi(z) = f'(z) g(z) - f(z) g'(z).
	\]
The roots of $\wronsk_\varphi$ are precisely the finite critical points of $\varphi$.

	For $\varphi = f /g$ as above, define a 2-variable rational function $A_\varphi(z,w)$ by the formula
	\[
		A_\varphi(z, w) = \frac{1}{z}\left[f(z+w)g(w) - f(w)g(z+w)\right].
	\]
We have suppressed the dependence of $A_\varphi$ on $f$ and $g$ from the notation. It is clearly a polynomial in the variable~$w$, and we will show that momentarily that it is also a polynomial in $z$. 

	As a motivating example, consider the case where $\varphi(z) = f(z)$ is a polynomial and $\mathrm{char}(k) = 0$. Viewing $w$ as a fixed parameter, we may expand $f(z+w)$ as a series about $z = 0$ to find that 
	\[
		f(z+w) - f(w) = f'(w) \, z + \frac{f''(w)}{2!} \, z^2 + \cdots + \frac{f^{(d)}(w)}{d!} \, z^d.
	\]
Hence $A_\varphi(z,w)$ is a polynomial in $z$ of degree $d-1$. Evidently $A_\varphi(0,w) = f'(w)$ is the Wronskian. Moreover, if $A_\varphi(z,w)$ does not vanish on the disk $D(0,r)$, then $\varphi$ is injective on the disk $D(w,r)$. These last two observations allow us to connect $A_\varphi$ with the ramification of $\varphi$. All of these properties will persist, at least in part, to the more general case of rational functions. 

\begin{prop}
	Let $\varphi = f / g \in k(z)$ be a nonconstant rational function of degree~$d$, and define the function $A_\varphi(z,w)$ as above. Write $f(z) = a_dz^d + \cdots + a_0$ and $g(z) = b_dz^d + \cdots + b_0$  with $a_i, b_j \in k$. Then $A_\varphi$ is given explicitly by $A_\varphi(z,w) = \sum_{\ell \geq 0} A_{\ell}(w) z^\ell$, where
	\[
		A_\ell(w) = \sum_{0 \leq i, j \leq d}  \left[ \binom{i}{\ell + 1} - \binom{j}{\ell+ 1} \right]
				a_i b_j w^{i+j - \ell - 1}.	
	\]
Moreover, the following statements are true:
	\begin{enumerate}
		\item $A_\varphi(z,w)$ has degree $d - 1$ as a polynomial in $z$.
		\item $A_0(w) = \wronsk_\varphi(w)$. In particular, $A_0(w)$ is identically zero if and 
			only if $\varphi$ is inseparable. 
	\end{enumerate}	
\end{prop}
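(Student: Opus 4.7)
The plan is to expand $f(z+w)$ and $g(z+w)$ using the binomial theorem, collect like powers of $z$, verify that $z$ divides the resulting expression so the division by $z$ is legitimate, and reindex.

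First, I would write
$$f(z+w)g(w) - f(w)g(z+w) = \sum_{i,j,k} a_i b_j \left[\binom{i}{k} - \binom{j}{k}\right] z^k w^{i+j-k}.$$
The $k = 0$ contribution is $\binom{i}{0} - \binom{j}{0} = 0$ for every pair $(i,j)$, so the right-hand side is divisible by $z$ as a polynomial in $z$. Setting $\ell = k - 1$ and dividing gives the claimed closed form for $A_\ell(w)$, and in particular shows that $A_\varphi$ is a polynomial in both variables.

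For statement (1), I would note that $\binom{i}{\ell+1} = \binom{j}{\ell+1} = 0$ whenever $\ell + 1 > d$, so $A_\ell \equiv 0$ for $\ell \geq d$. To see $A_{d-1}$ does not vanish identically, I would specialize: at $\ell = d - 1$ the bracketed difference is nonzero only when exactly one of $i, j$ equals $d$, so the formula collapses to
$$A_{d-1}(w) = a_d\, g(w) - b_d\, f(w).$$
Since $\varphi = f/g$ is nonconstant, this polynomial is not identically zero, so the $z$-degree of $A_\varphi$ is exactly $d - 1$.

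For statement (2), plugging $\ell = 0$ into the formula produces
$$A_0(w) = \sum_{i,j} a_i b_j (i - j) w^{i + j - 1} = f'(w)g(w) - f(w)g'(w) = \wronsk_\varphi(w),$$
identifying $A_0$ with the Wronskian. Combined with the standard identity $\varphi' = \wronsk_\varphi / g^2$, this shows $A_0 \equiv 0$ iff $\varphi' \equiv 0$ iff $\varphi$ is inseparable. The whole argument is routine binomial bookkeeping; the only step requiring a small observation is the explicit evaluation of $A_{d-1}$, since that is where the hypothesis of nonconstancy of $\varphi$ enters to confirm the degree in $z$ is exactly $d-1$ rather than merely at most $d-1$.
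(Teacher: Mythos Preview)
Your argument is correct and follows the same route as the paper: binomial expansion, cancellation of the constant term in $z$, reindexing, and then direct inspection of $A_{d-1}$ and $A_0$. Your identification $A_{d-1}(w) = a_d\,g(w) - b_d\,f(w)$ is in fact a slightly cleaner packaging of the paper's coefficient-by-coefficient formula $\sum_{i<d}(a_d b_i - a_i b_d)w^i$, but the underlying computation and the use of nonconstancy are identical.
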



\begin{proof}
	To ease notation, let us make the convention that $a_i = b_i = 0$ if $i < 0$ or $i > d$. Then
	\benn
	\ba
		z \cdot A_\varphi(z,w) &= 
			\sum_{i \geq 0} a_i(z+w)^i \sum_{j \geq 0} b_j w^j - \sum_{i \geq 0} a_i w^i \sum_{j \geq 0}
			b_j(z+w)^j  \\
			&=  \sum_{i,j \geq 0} a_i b_j \sum_{0 \leq \ell \leq i} \binom{i}{\ell} w^{i+j - \ell} z^{\ell}
			-\sum_{i,j \geq 0} a_i b_j \sum_{0 \leq \ell \leq j} \binom{j}{\ell} w^{i+j - \ell}z^{\ell} \\
			&=  \sum_{\ell \geq 0} \left\{ \sum_{\substack{i \geq \ell \\ j \geq 0}} a_i b_j
				\binom{i}{\ell} w^{i+j - \ell} - \sum_{\substack{i \geq 0 \\ j \geq \ell}} a_i b_j
				\binom{j}{\ell} w^{i+j - \ell} \right\}z^{\ell} \\
			&= z  \cdot \sum_{\ell \geq 1} \left\{ \sum_{i,j \geq 0} \left[ \binom{i}{\ell} - \binom{j}{\ell} \right]
				a_i b_j w^{i+j - \ell} \right\}z^{\ell - 1},
	\ea
	\eenn
where the last line follows from the previous one upon observing that the $\ell = 0$ term vanishes (trivially) and $\binom{i}{\ell} = 0$ for $0 \leq i \leq \ell - 1$ and $\ell \geq 1$. Replacing $\ell$ with $\ell + 1$ and dividing by $z$ gives the desired explicit formula for $A_\varphi(z,w)$. 

	Since  $\binom{i}{\ell + 1} = 0$ if $0 \leq i \leq \ell$,  it follows that when $\ell \geq d$, we have 
	\[
		\left[ \binom{i}{\ell + 1} - \binom{j}{\ell+ 1} \right] a_i b_j = 0 \quad \text{for all $i, j \geq 0$}.
	\]
Hence $A_\ell(w) = 0$ for all $\ell \geq d$. 

	We now show that $A_{d - 1} \neq 0$. Observe that $\binom{i}{d} = \binom{j}{d}$ if both $i \leq d-1$ and $j \leq d - 1$, or if $i = j = d$. Hence
	\benn
		\ba
			A_{d-1}(w) &= \sum_{i,j \geq 0} \left[ \binom{i}{d} - \binom{j}{d} \right]
				a_i b_j w^{i+j - d} \\
			 &= - \sum_{0 \leq i \leq d-1} a_i b_d w^i + \sum_{0 \leq j \leq d-1} a_d b_j w^j \\
			&= \sum_{0 \leq i \leq d-1} (a_d b_i - a_i b_d) w^i.
		\ea
	\eenn
If $A_{d-1} = 0$, we find $a_db_i = a_i b_d$ for all $i = 0, \ldots, d-1$. If also $a_d \neq 0$, then $b_i = a_ib_d / a_d$ for $i = 0, \ldots, d-1$, which implies that $\varphi(z) = a_d / b_d$ is constant, a contradiction. A similar argument applies if $b_d \neq 0$. Since $\varphi$ has degree~$d$, at least one of $a_d$ or $b_d$ is nonzero, and hence we have arrived at a contradiction. 

	Finally, to see that that $A_0(w)$ agrees with the Wronskian of $\varphi$, we compute:
	\benn
		\ba
		\wronsk_\varphi(w) &= f'(w)g(w) - f(w)g'(w) \\
			&= \sum_{i, j \geq 0}i a_i b_jw^{i+j-1} - \sum_{i,j \geq 0} ja_i b_j w^{i+j - 1} \\
			&= \sum_{i, j \geq 0} \left[\binom{i}{1} - \binom{j}{1} \right]a_i b_j w^{i+j - 1} = A_0(w).\\
		\ea
	\eenn
As $\wronsk_\varphi$ is the numerator of the derivative $\varphi'$, the equivalence between inseparability and vanishing of the Wronskian is \cite[Prop.~2.3]{Faber_Berk_RamI_2011}.
\end{proof}


\subsection{Definition and Properties of the Visible Ramification}
\label{Sec: Visible}

    Define the \textbf{visible ramification} $\tau_\varphi: \HH \to \RR$ by the formula
        \[
            \tau_\varphi(x) = \begin{cases}
            	\displaystyle \log_{q_k} \diam(x) 
			+ \max_{\ell \geq 1} \log_{q_k}\left| \frac{A_\ell}{A_0}(x) \right|^{1/\ell}
		& \text{if $x \in \Ram_\varphi$} \\
		0 & \text{if $x \not\in \Ram_\varphi$}.
		\end{cases}
        \]
Here $\diam(\cdot)$ is the affine diameter defined in \cite[\S2.2.3]{Faber_Berk_RamI_2011}; for example, if $x = \zeta_{a,r}$ is a point of type~II or~III, then $\diam(x) = r$. Note that $A_0$ and $A_{d-1}$ are not identically zero, so $\tau_\varphi(x)$ is well-defined. The proofs of the next two propositions will occupy the remainder of this section.\footnote{The visible ramification is related to Baldassarri's notion of generic radius of convergence \cite[Def.~2.0.9]{Baldassarri_Berk_Ram} via the formula 
$\tau_\varphi(x) = -\log_{q_k} R_{D \subset \Berk} (\Sigma)$. Here $\Sigma$ is the first order system of linear differential equations associated to the connection $(\varphi_*\OO_{\Berk}, \nabla)$ alluded to in the discussion after Theorem~D.}

\begin{prop}
\label{Prop: Tau properties}
	The visible ramification $\tau_\varphi$ enjoys the following four properties:
	
	\begin{enumerate}
		\item\label{Item: Strong Continuous}
			 $\tau_\varphi$ is nonnegative and continuous for the strong topology on $\HH$. 
		
		\item\label{Item: Finite Subgraph}
			 If $\Gamma$ is any finite subgraph of $\HH$, then $\tau_\varphi|_\Gamma$ is 
			piecewise affine with respect to a parameterization using the path-metric distance $\rho$. 
			
		\item\label{Item: Vanishing} $\tau_\varphi(x) = 0$ if and only if $\varphi$ has separable reduction at $x$.
		  
		  \item\label{Item: Support} The topological support of $\tau_\varphi$ agrees with the set of 
		  	points $x \in \HH$ such that the residue characteristic of $k$ divides $m_\varphi(x,\vec{v})$ 
			for some tangent direction $\vec{v} \in T_x$.\footnote{We could rephrase this condition by
			saying that $\varphi$ has \textbf{wildly ramified reduction} at $x$.}
	
	\end{enumerate}
\end{prop}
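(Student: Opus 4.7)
The plan is to establish the four properties in sequence, relying throughout on two standard facts about the Berkovich line: for any polynomial $P \in k[w]$, the map $x \mapsto \log_{q_k}|P(x)|$ is strongly continuous on $\HH$ and piecewise $\rho$-affine on every finite subgraph, and the same holds for $\log_{q_k}\diam(\cdot)$. Since $A_\varphi(z,w)$ has $z$-degree at most $d-1$, the maximum in the definition of $\tau_\varphi$ on $\Ram_\varphi$ is taken over the finite set $\ell = 1,\dots,d-1$, so there is no issue of infinite maxima.

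The key lemma I would prove first is a geometric criterion that ties $A_\varphi$ to $\Ram_\varphi$: a point $x \in \HH$ lies in $\Ram_\varphi$ if and only if $\max_{\ell \geq 1} |A_\ell(x)| \diam(x)^\ell \geq |A_0(x)|$. The forward direction uses the defining identity $z\,A_\varphi(z,w) = g(z+w) g(w)[\varphi(z+w) - \varphi(w)]$: failure of local injectivity at $x = \zeta_{w_0, r}$ produces distinct preimages of a common value inside the relevant disk, which translates into a nontrivial zero of $A_\varphi$ in the bidisk $\overline{D(0, r)} \times \overline{D(w_0, r)}$, and expanding in $z$ yields the inequality. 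The converse is a Newton-polygon argument on an appropriate type-I slice. With this criterion in hand, nonnegativity of $\tau_\varphi$ on $\Ram_\varphi$ is immediate, the limiting equality along $\partial\Ram_\varphi$ delivers continuity across the boundary for (\ref{Item: Strong Continuous}), and (\ref{Item: Finite Subgraph}) follows because the max of finitely many strongly continuous piecewise $\rho$-affine functions preserves both properties.

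For (\ref{Item: Vanishing}), I would use $\PGL_2(k)$ to move $x$ to the Gauss point $\zeta_{0,1}$ and then normalize $\varphi = f/g$ so that $f, g \in \OO_k[z]$, $\max\{|f|_{\Gauss}, |g|_{\Gauss}\} = 1$, and $\gcd(\tilde f, \tilde g) = 1$ in $\tilde k[z]$, giving a well-defined nonconstant reduction $\tilde\varphi = \tilde f / \tilde g \in \tilde k(z)$. A direct computation using the explicit formula for $A_\ell(w)$ shows that the coefficients of $A_\varphi$ also lie in $\OO_k$ and that $\widetilde{A_\varphi} = A_{\tilde\varphi}$ in $\tilde k[z,w]$; in particular $\widetilde{A_0}(w) = \wronsk_{\tilde\varphi}(w)$. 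The condition $\tau_\varphi(\zeta_{0,1}) = 0$ then translates to $|A_0|_{\Gauss} = \max_\ell |A_\ell|_{\Gauss}$, i.e. $\widetilde{A_0} \not\equiv 0$ in $\tilde k[w]$, which by \cite[Prop.~2.3]{Faber_Berk_RamI_2011} applied to $\tilde\varphi$ is precisely separability of the reduction.

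For (\ref{Item: Support}), I would combine (\ref{Item: Vanishing}) with the dictionary between direction multiplicities $m_\varphi(x,\vec v)$ at a type II point and the local ramification indices of $\tilde\varphi_x$ at the corresponding points of $\PP^1(\tilde k)$. Inseparability of $\tilde\varphi_x$ forces the map to factor through Frobenius and hence forces every direction multiplicity to be divisible by $p$, so the inseparable-reduction locus is contained in $W = \{x : p \mid m_\varphi(x,\vec v) \text{ for some } \vec v \in T_x\}$; standard semicontinuity of direction multiplicities along finite subgraphs shows that $W$ is closed in $\HH$. The hardest step will be the reverse containment $W \subseteq \supp(\tau_\varphi)$: if $p \mid m_\varphi(x, \vec v)$ for some single $\vec v$ but $\tilde\varphi_x$ is still separable overall (the example to keep in mind being Artin--Schreier behavior at one point), I would argue that moving the Berkovich center a short $\rho$-distance along $\vec v$ collapses the residue curve into a purely inseparable form, exhibiting $x$ as a limit of points with $\tau_\varphi > 0$. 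I expect the technical heart of this step to be a $p$-adic analysis of the binomial coefficients in the explicit formula for $A_\ell(w)$ near a wildly ramified direction, showing that the ratio $|A_\ell(y)|/|A_0(y)|$ grows past $\diam(y)^{-\ell}$ as $y$ moves off $x$ into $\vec v$.
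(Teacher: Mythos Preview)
Your key lemma is misstated: the biconditional fails on $\Hull(\Crit(\varphi)) \smallsetminus \Ram_\varphi$, where one has $\max_{\ell \geq 1}|A_\ell(x)|\,\diam(x)^\ell = |A_0(x)|$ even though $m_\varphi(x) = 1$; this is precisely the middle case of Proposition~\ref{Prop: Alternate}. Your Newton-polygon converse only produces a root $\alpha$ of $A_\varphi(z,y)$ with $|\alpha| \leq \diam(x)$, and when $|\alpha| = \diam(x)$ the companion point $y+\alpha$ can fall in a tangent direction at $x$ that contains a critical point, where positive surplus multiplicity allows $\varphi(y+\alpha) = \varphi(y)$ without forcing $m_\varphi(x) > 1$. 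Fortunately only the one-sided implications ``$x \in \Ram_\varphi \Rightarrow \mathfrak{t}_\varphi(x) \geq 0$'' and ``$\mathfrak{t}_\varphi(x) > 0 \Rightarrow x \in \Ram_\varphi$'' are needed for nonnegativity and for matching the two pieces of $\tau_\varphi$ across $\partial \Ram_\varphi$, and your sketch does yield these once generic fibers are chosen carefully; in the paper this care is packaged as Lemmas~\ref{Lem: Ramification Segment}--\ref{Lem: Diameter bound}.

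Your approach to~\eqref{Item: Vanishing} via the identity $\widetilde{A_\varphi} = A_{\tilde\varphi}$ and $\widetilde{A_0} = \wronsk_{\tilde\varphi}$ is genuinely different from the paper's and works cleanly for type~II points; the paper instead extracts~\eqref{Item: Vanishing} from the equality case of Lemma~\ref{Lem: Diameter bound}. Your route is more conceptual, while the paper's feeds directly into Proposition~\ref{Prop: Ramification Bounded by Tau}. You should still mention the field-extension trick to handle non-type-II points, which the paper invokes explicitly.

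Your plan for~\eqref{Item: Support} has a real gap. Closedness of $W$ is asserted but not proved, and the key inclusion $W \subseteq \supp(\tau_\varphi)$ is left as ``I expect the technical heart to be a $p$-adic analysis of binomial coefficients\ldots''. The paper does not attempt any such coefficient analysis; it runs a short case split on whether $x$ is a strong interior point of $\Ram_\varphi$, lies in $\Ram_\varphi \smallsetminus \Hull(\Crit(\varphi))$, or lies in $\Ram_\varphi \cap \Hull(\Crit(\varphi))$, and in each case invokes structural results from \cite{Faber_Berk_RamI_2011} (Theorem~B and Propositions~6.6, 7.2--7.4) to exhibit a segment with inseparable reduction abutting $x$ whenever $p \mid m_\varphi(x,\vec v)$ for some $\vec v$.
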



    Given $x \in \HH$, define two exceptional sets of tangent directions $E_{\varphi, x} \subset E_{\varphi, x}' \subset T_x$ as follows: 
    \begin{eqnarray*}
        E_{\varphi, x} &=&  \{\vec{v} \in T_x : \BB_x(\vec{v})^- \cap \Crit(\varphi) \neq \emptyset \} \\
        E_{\varphi, x}' &=& \{\vec{v} \in T_x : \BB_x(\vec{v})^- \text{ contains $\infty$ or a root of some nonzero } A_\ell(w), 
        \ell = 0, \ldots, d-1\}.
    \end{eqnarray*}
As $\varphi$ is separable, these sets are finite. It is impossible to control the ramification in the directions  lying in $E_{\varphi, x}$ because of the presence of critical points. Each of the directions in $T_x \smallsetminus E_{\varphi, x}'$ looks the same with regard to ramification and will be relatively easy to deal with, while the ramification in the directions $E_{\varphi, x}'$ is much more subtle.

\begin{prop}
\label{Prop: Ramification Bounded by Tau}
	Let $x \in \HH$ be a type~II point, let $\vec{v} \in T_x \smallsetminus E_{\varphi,x}$, and write $U = \BB_x(\vec{v})^-$. Then 
	\[
		\max_{\zeta \in U \cap \Ram_\varphi} \rho(x, \zeta) \leq \tau_\varphi(x),
	\]
with equality if $\vec{v} \not\in E_{\varphi,x}'$. (Here we interpret the maximum as zero if $U \cap \Ram_\varphi$ is empty.)
\end{prop}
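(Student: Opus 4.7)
The plan is to reduce to type~II points in $U$ and characterize ramification there via the Newton polygon of $A_\varphi(z, w')$ as a polynomial in $z$. Writing $x = \zeta_{w_0, r_0}$, any type~II point in $U$ has the form $\zeta = \zeta_{w', s'}$ with $w' \in U \cap k$ and $s' < r_0$, and the identity
\[
    \varphi(w' + z) - \varphi(w') \;=\; \frac{z \cdot A_\varphi(z, w')}{g(w' + z)\, g(w')}
\]
shows that the local behavior of $\varphi$ near $\zeta$ is governed by the slopes of $A_\varphi(\cdot, w')$ at radius $s'$.

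The first thing I would establish is the following key characterization: for $w' \in k$ with $A_0(w') \neq 0$ and $s' > 0$, the type~II point $\zeta_{w', s'}$ lies in $\Ram_\varphi$ if and only if
\[
    \max_{\ell \geq 1} |A_\ell(w')| \, s'^{\ell} \;\geq\; |A_0(w')|,
\]
i.e., some non-constant term of $A_\varphi(z, w') = \sum_\ell A_\ell(w') z^\ell$ catches up with the constant term $A_0(w') = \wronsk_\varphi(w')$ at radius $s'$. This should follow from the displayed identity together with the multiplicity/reduction theory of the prequel: the Newton polygon of $A_\varphi(\cdot, w')$ at radius $s'$ reads off the degree of the reduction $\tilde\varphi_\zeta$, with any poles of $g$ in $D(w', s')$ causing no trouble because $\infty$ and all poles of $\varphi$ lie in directions already excluded by $E_{\varphi, x}$.

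Granting this, the inequality follows from two non-Archimedean observations about the restrictions of the $A_\ell$ to $U$. Since $\vec{v} \notin E_{\varphi, x}$, no critical point lies in $U$, so $A_0$ has no zero in $U \cap k$ and $|A_0|$ is constant on $U$ with value $|A_0(x)|$. For $\ell \geq 1$, each $A_\ell$ is a polynomial, so $|A_\ell(w')| \leq |A_\ell(x)|$ for every $w' \in D(w_0, r_0)$. Given $\zeta_{w', s'} \in U \cap \Ram_\varphi$, the characterization yields some $\ell \geq 1$ with $|A_\ell(x)|\, s'^{\ell} \geq |A_\ell(w')|\, s'^{\ell} \geq |A_0(w')| = |A_0(x)|$, hence $s' \geq (|A_0(x)|/|A_\ell(x)|)^{1/\ell}$. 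Because $\zeta_{w', s'} \in U$ forces $\zeta_{w', r_0} = x$, one has $\rho(x, \zeta_{w', s'}) = \log_{q_k}(r_0/s')$, which is bounded by $\tau_\varphi(x)$ from the definition. Type~IV points in $U \cap \Ram_\varphi$ are absorbed by density of type~II points and closedness of $\Ram_\varphi$. For the equality clause, the stronger hypothesis $\vec{v} \notin E_{\varphi, x}'$ kills the zeros of \emph{every} $A_\ell$ on $U$, so each $|A_\ell|$ is now constant on $U$. Letting $\ell_0$ attain the max in the definition of $\tau_\varphi(x)$, any choice of $w' \in U \cap k$ with $s' = (|A_0(x)|/|A_{\ell_0}(x)|)^{1/\ell_0}$ realizes a point $\zeta_{w', s'} \in U \cap \Ram_\varphi$ with $\rho(x, \zeta_{w', s'}) = \tau_\varphi(x)$ via the reverse direction of the characterization; the degenerate subcase $x \notin \Ram_\varphi$ forces $U \cap \Ram_\varphi = \emptyset$ by applying the same computation at $x$ itself, so both sides equal $0$.

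The hard part will be the key characterization itself: translating the algebraic condition on the coefficients $A_\ell(w')$ into the geometric statement that $\tilde\varphi_{\zeta_{w', s'}}$ has degree at least two. This is the conceptual bridge between the auxiliary polynomial and the ramification locus, and I expect it to require a careful unwinding of the reduction-theoretic multiplicity formulas from \cite{Faber_Berk_RamI_2011}, relating the slopes of the Newton polygon of $A_\varphi(\cdot, w')$ at radius $s'$ to the local degrees of $\tilde\varphi$ along the tangent directions at $\zeta_{w', s'}$. Once this bridge is built, the rest of the proof is essentially polynomial norm bookkeeping on $U$.
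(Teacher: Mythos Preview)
Your overall strategy is the paper's strategy, but your ``key characterization'' is false as stated, and the error is precisely the parenthetical where you dismiss poles. The set $E_{\varphi,x}$ excludes only directions containing \emph{critical} points; finite poles of $\varphi$ need not be critical, and they are perfectly free to lie in $U$. When $D(w',s')$ contains a pole, the identity $\varphi(w'+z)-\varphi(w') = z\,A_\varphi(z,w')/\bigl(g(w'+z)g(w')\bigr)$ no longer translates the Newton polygon of $A_\varphi(\cdot,w')$ into the multiplicity $m_\varphi(\zeta_{w',s'})$. Concretely, take $\varphi(z)=(z^3+1)/z$ in residue characteristic~$2$: the pole at $0$ lies in $U=\DD(0,|2|^{-1/3})^-$, and for $1<s'<|2|^{-1/3}$ one has $\zeta_{0,s'}\in\Ram_\varphi$ (its reduction is $z^2$), yet for any $w'$ with $|w'|$ small one computes $|A_1(w')|s'+|A_2(w')|s'^2 \ll 1=|A_0(w')|$. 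So the $\Rightarrow$ direction of your characterization fails, and with it your upper-bound argument. The point is that the right-hand side of your biconditional depends on the \emph{representative} $w'$ of the disk, not just on $\zeta_{w',s'}$.

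The paper confronts exactly this obstruction. Its Lemma~\ref{Lem: Ramification Segment} only asserts $r_y=r_y'$ under the extra hypothesis $\varphi(\DD(y,r_y))\neq\Berk$, which is equivalent to being able to push poles out of $D(y,r_y)$ by a target M\"obius change. To prove the inequality in the proposition, the paper does not apply this to an arbitrary representative of $\zeta$; instead it moves slightly above $\zeta$ to a point $\xi$ with inseparable reduction, picks one of the infinitely many ramified directions $\vec w$ at $\xi$ for which $\varphi(\BB_\xi(\vec w)^-)\neq\Berk$ (only finitely many directions can surject), and takes $y$ there. For that well-chosen $y$ one has both $r_y=r_y'$ and $r_y<\diam(\xi)$, which combined with the Newton-polygon bound $r_y'\geq(\max_\ell|A_\ell/A_0(x)|^{1/\ell})^{-1}$ (your polynomial-norm bookkeeping, which is Lemma~\ref{Lem: Small Root Bound}) yields the inequality. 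Your $\Leftarrow$ direction and the equality clause for $\vec v\notin E_{\varphi,x}'$ are essentially sound, but the upper bound needs this extra maneuver to avoid poles.
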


\begin{remark}
	Equality need not be attained in the proposition for $\vec{v} \in E_{\varphi, x}' \smallsetminus E_{\varphi, x}$. 
	See Example~\ref{Ex: Pathology}.
\end{remark}

	The strategy of the proofs of these results is as follows. Let $x = \zeta_{a, R}$ be a type~II point in the ramification locus. Given a classical point $y \prec x$ that is not a critical point, we will associate two quantities to it. The first is $r_y$: the minimum radius of a disk such that $\zeta_{y, r_y} \in \Ram_\varphi$.  The second is $r_y'$: the minimum absolute value of a root of the polynomial $A_\varphi(z, y)$. On one hand, the quantity $r_y$ is geometric, and calculating it is equivalent to determining the maximum distance that ramified points can extend from $x$ toward~$y$. So it will be necessary to bound $r_y$ below to prove Theorems~D and~E. On the other hand, the quantity $r_y'$ is algebraic and the Newton polygon can be used to give bounds for it. Generically, we expect that $r_y = r_y'$ because ramification and vanishing of $A_\varphi(z,y)$ both signal many-to-one behavior of the function~$\varphi$. Most of our effort will be devoted to the cases where this equality fails. The following lemma is a first step in justifying all of these claims. 
	
\begin{lem}
\label{Lem: Ramification Segment}
	Let $\varphi \in k(z)$ be a separable nonconstant rational function, and let $U = \DD(a, R)^-$ 
	be an open Berkovich disk. Suppose that $U$ contains no critical point of $\varphi$, and that 
	$\overline{U} \cap \Ram_\varphi$ is nonempty. For each classical point $y \in U$, 
	there exists a real number $r_y$ satisfying $0 < r_y \leq R$ such that 
	\[
	    [y, \zeta_{a,R}] \cap \Ram_\varphi = [\zeta_{y, r_y}, \zeta_{a,R}].	
	\]
    If $\varphi(\DD(y, r_y)) \neq \Berk$, then $r_y$ is the minimum absolute value 
    of a root of $A_\varphi(z,y)$. 
\end{lem}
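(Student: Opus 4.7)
The plan is to prove the segment structure $[y,\zeta_{a,R}]\cap\Ram_\varphi=[\zeta_{y,r_y},\zeta_{a,R}]$ using Theorem~A of \cite{Faber_Berk_RamI_2011} together with the $\RR$-tree structure of $\Berk$, and then to identify $r_y$ with the minimum modulus of a root of $A_\varphi(z,y)$ via the algebraic identity $zA_\varphi(z,y)=f(z+y)g(y)-f(y)g(z+y)$.

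For the segment structure, I would first show $\zeta_{a,R}\in\Ram_\varphi$. Given $\eta\in\overline{U}\cap\Ram_\varphi$ from the hypothesis, let $C$ be its connected component in $\Ram_\varphi$. By Theorem~A, $C$ contains a critical point; since none lies in $U$, the component $C$ must reach outside $U$, and arc-connectedness in the $\RR$-tree $\Berk$ forces $C$ to pass through the unique boundary point $\zeta_{a,R}$ of the sub-tree $U$. Parameterizing $[y,\zeta_{a,R}]$ by $r\mapsto\zeta_{y,r}$ for $r\in[0,R]$, the set $\{r:\zeta_{y,r}\in\Ram_\varphi\}$ is closed, contains $R$, and excludes $0$ (as $y$ is classical and non-critical), so $r_y=\min\{r:\zeta_{y,r}\in\Ram_\varphi\}$ lies in $(0,R]$. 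The inclusion $[\zeta_{y,r_y},\zeta_{a,R}]\subset\Ram_\varphi$ follows by rerunning the exit argument with the component of $\Ram_\varphi$ through $\zeta_{y,r_y}$; the reverse inclusion is by minimality of $r_y$.

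For Part~2, assume $\varphi(\DD(y,r_y))\neq\Berk$. The factorization $zA_\varphi(z,y)=f(z+y)g(y)-f(y)g(z+y)$ shows that a nonzero $\alpha$ is a root of $A_\varphi(z,y)$ iff $\varphi(y+\alpha)=\varphi(y)$; hence the roots are the displacements $c-y$ of the remaining preimages $c\neq y$ of $\varphi(y)$ (with minor bookkeeping when $y$ is a pole of $\varphi$, which can be dispatched by conjugating with a M\"obius transformation). Set $r_y'=\min\{|\alpha|:A_\varphi(\alpha,y)=0\}$. The inequality $r_y\leq r_y'$ follows by picking a root $\alpha$ with $|\alpha|=r_y'$: the preimage $c=y+\alpha$ defines a tangent direction at $\zeta_{y,r_y'}$ distinct from the direction toward $y$, and both directions contain preimages of $\varphi(y)$. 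Under $\varphi$ both map to the unique direction toward $\varphi(y)$ at $\varphi(\zeta_{y,r_y'})$, witnessing non-injectivity at $\zeta_{y,r_y'}$ and placing it in $\Ram_\varphi$.

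The reverse inequality $r_y\geq r_y'$ is the main obstacle and is precisely where the non-surjectivity hypothesis is used. For $r<r_y'$, the polynomial $A_\varphi(\cdot,y)$ has no root in $\DD(0,r)$, so by the Newton polygon its Gauss norm on $\DD(0,r)$ is $|A_0(y)|=|\wronsk_\varphi(y)|\neq 0$ (using that $y$ is not a critical point). Combined with the factorization, this shows $\varphi(\cdot+y)-\varphi(y)$ has only the simple zero $z=0$ on $\DD(0,r)$, so $y$ is the unique preimage of $\varphi(y)$ in $\DD(y,r)$. The hypothesis $\varphi(\DD(y,r_y))\neq\Berk$ descends to the sub-disk $\DD(y,r)$, whose image is thus a proper Berkovich disk; together with the degree-one count this forces $\varphi|_{\DD(y,r)}$ to be an isomorphism of Berkovich disks. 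A key geometric consequence is that every other preimage component of $\varphi(\DD(y,r)^-)$ sits strictly outside $\DD(y,r_y)$ and therefore cannot meet $\zeta_{y,r}$, so only one tangent direction at $\zeta_{y,r}$ maps to the direction into the image disk, the total multiplicity is $1$, and $\zeta_{y,r}\notin\Ram_\varphi$, giving $r_y\geq r_y'$.
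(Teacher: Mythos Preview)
Your treatment of the segment structure is essentially the paper's argument, and your inequality $r_y \le r_y'$ via the tangent-direction count is correct (and a nice alternative to the paper's unified approach).

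The gap is in the reverse inequality $r_y \ge r_y'$. Two problems compound each other:

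\begin{itemize}
\item You take $r < r_y'$ and invoke ``the hypothesis $\varphi(\DD(y,r_y)) \neq \Berk$ descends to the sub-disk $\DD(y,r)$.'' But this descent requires $r \le r_y$, and since $r_y \le r_y'$ is all you know so far, there may well be $r$ with $r_y < r < r_y'$ for which nothing descends. This is repairable by arguing by contradiction and setting $r = r_y$.
\item More seriously, knowing that $\varphi|_{\DD(y,r)}$ is a bijection onto a Berkovich disk does \emph{not} by itself force $m_\varphi(\zeta_{y,r}) = 1$: the multiplicity at the boundary point is a local invariant and can exceed the degree of the map restricted to one incident disk (e.g.\ $z \mapsto z^2$ is a bijection of $\DD(1,1)^-$ onto itself in residue characteristic $\neq 2$, yet $m(\zeta_{0,1}) = 2$). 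Your assertion that ``every other preimage component of $\varphi(\DD(y,r)^-)$ sits strictly outside $\DD(y,r_y)$'' is precisely what would be needed, but you give no justification for it, and I do not see a direct one.
\end{itemize}

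What is actually required is the identity $m_\varphi(\zeta_{y,r}) = \#\{\text{zeros of } \varphi(\cdot) - \varphi(y) \text{ in } D(y,r)\}$ when $\varphi$ has no pole on $D(y,r)$; this is the content of the paper's Lemma~\ref{Lem: Multiplicity and zeros}. The paper's proof uses the non-surjectivity hypothesis exactly once, to post-compose with a M\"obius transformation so that $\infty \notin \varphi(\DD(y,r_y))$ (noting that such a change leaves the roots of $A_\varphi(z,y)$ unchanged, since $A_{\sigma \circ \varphi} = \det(\sigma)\, A_\varphi$). With no poles present, Lemma~\ref{Lem: Multiplicity and zeros} gives both inequalities at once: $m_\varphi(\zeta_{y,r}) > 1$ if and only if $A_\varphi(z,y)$ has a root in $D(0,r)$.
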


\begin{proof}
    Write $Z = [y, \zeta_{a,R}] \cap \Ram_\varphi$. If $U \cap \Ram_\varphi$ is empty, then we must have $Z = \partial U = \{\zeta_{a,R}\}$. Set $r_y = R$. 
    
    If instead $U \cap \Ram_\varphi$ is nonempty, then $\varphi$ has inseparable reduction at $\zeta_{a,R}$ \cite[Prop.~6.6]{Faber_Berk_RamI_2011}, so that this point lies in the ramification locus. In particular, $Z$ is nonempty. Define $r_y$ to be the infimum of the set $\{ r :  0\leq r \leq R, \zeta_{y,r} \in \Ram_\varphi\}$. Since $\Ram_\varphi$ is closed, the infimum is attained, so that $\zeta_{y, r_y} \in \Ram_\varphi$. The whole segment $[\zeta_{y,r_y}, \zeta_{a,R}]$ lies inside the ramification locus; for otherwise $U$ would contain an entire connected component of $\Ram_\varphi$, which would contradict the fact that each connected component of the ramification locus contains a critical point \cite[Thm.~A]{Faber_Berk_RamI_2011}. Observe that $r_y > 0$ because $U$ contains no critical point. 
    
    Before we complete the proof, we observe that if $\sigma \in \PGL_2(k)$, then the roots of $A_\varphi(z,y)$ and $A_{\sigma \circ \varphi}(z,y)$ agree. Indeed, if $\varphi = f / g$ and $\sigma(z) = (\alpha z + \beta) / (\gamma z + \delta)$, then $\sigma \circ \varphi = (\alpha f + \beta g) / (\gamma f + \delta g)$, and a direct computation shows that 
    \[
        A_{\sigma \circ \varphi}(z,w) = (\alpha \delta - \beta \gamma) A_\varphi(z,w). 
    \]
Moreover, the ramification loci for $\varphi$ and $\sigma \circ \varphi$ agree \cite[Cor.~3.7]{Faber_Berk_RamI_2011}.
    
    Now suppose that $\varphi(\DD(y, r_y)) \neq \Berk$. Without loss of generality, we may assume that $\infty$ does not lie in the image of $\DD(y,r_y)$ under $\varphi$. Indeed, if $x \neq \infty$ is a classical point that does not lie in the image of $\DD(y, r_y)$, then let $\sigma(z) = 1 / (z - x)$. Now $\infty \not\in \sigma \circ \varphi \left(\DD(y, r_y)\right)$. The previous paragraph shows that it suffices to replace $\varphi$ with $\sigma \circ \varphi$ in order to prove the lemma.
    
    By construction, we have $m_\varphi(\zeta_{y, r_y}) > 1$ and $m_\varphi(\zeta_{y, r}) = 1$ for all $r < r_y$. A type~III point is never an endpoint of the ramification locus \cite[Prop.~7.3]{Faber_Berk_RamI_2011}, so $\zeta_{y, r_y}$ must be of type~II. Choose $t \in k$ such that $0 < |t| \leq r_y$. Set $\psi(z) = \varphi(tz+y) - \varphi(y)$. Then $\psi(0) = 0$ and $\psi$ has no pole in the disk $D(0,1)$. The following lemma implies that $\psi$ has a nonzero root in the disk $D(0, 1)$ if and only if $|t| = r_y$, and so $\varphi(z+y) = \varphi(y)$ has a nonzero solution in the disk $D(0, |t|)$ if and only if $|t| = r_y$. Since $A_\varphi(z, y) = z^{-1}g(y)g(z+y)[\varphi(z+y) - \varphi(y)]$, and since $g(z+y)$ does not vanish for $z$ in the disk $D(0, r_y)$, we conclude that $r_y$ is the minimum absolute value of a root of $A_\varphi(z,y)$.
\end{proof}

\begin{lem}
\label{Lem: Multiplicity and zeros}
	Let $\psi(z) \in k(z)$ be a nonconstant separable rational function with no pole in the disk $D(0,1)$ such that $\psi(0) = 0$. For each real number $0 < r \leq 1$, the multiplicity satisfies 
	\[
		m_\psi(\zeta_{0,r}) = \#\{\text{zeros of $\psi$ in $D(0,r)$}\}.  
	\]
\end{lem}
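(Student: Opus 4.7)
The plan is to expand $\psi$ as a convergent power series on $D(0,1)$ and then show that both sides of the identity compute the same ``Newton polygon break index'' $N(r)$.

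First I would establish the power series. Writing $\psi = f/g$ with $f, g \in k[z]$ coprime and $g$ nonvanishing on $D(0,1)$, the polynomial $g$ satisfies $|g(z)| = |g(0)| \neq 0$ throughout $\DD(0,1)$, hence is a unit in the Tate algebra $\OO(D(0,1))$. Thus $\psi \in \OO(D(0,1))$ and admits an expansion $\psi(z) = \sum_{n \geq 1} c_n z^n$ with $|c_n| \to 0$ (the constant term vanishes because $\psi(0) = 0$). For $0 < r \leq 1$, set $s(r) = \max_{n \geq 1} |c_n| r^n$ and let $N(r)$ be the largest index realizing this maximum. I claim both sides of the desired equation equal $N(r)$.

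For the count of zeros, the plan is to invoke Weierstrass preparation for convergent power series over $k$: one factors $\psi = P \cdot u$ on $D(0,1)$, with $P \in k[z]$ a polynomial of degree $N(1)$ whose roots all lie in $D(0,1)$ and $u$ a unit. Since $u$ is nowhere vanishing, the zeros of $\psi$ in any subdisk $D(0,r)$ coincide with those of $P$; a standard Newton polygon argument identifies this count as $N(r)$.

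For the multiplicity, I would first treat the type~II case $r = |t|$ with $t \in k^\times$. The substitution $z = tz'$ transports $\zeta_{0,r}$ to the Gauss point, and the definition of $m_\psi(\zeta_{0,r})$ via reductions (as used throughout \cite{Faber_Berk_RamI_2011}) asks for the degree of the reduction of $u^{-1}\psi(tz')$, where $u \in k^\times$ has $|u| = s(r)$. All coefficients now have absolute value $\leq 1$, and the decay $|c_n| r^n \to 0$ forces all but finitely many to reduce to zero; precisely those $n$ with $|c_n| r^n = s(r)$ survive, so the reduction is a polynomial of degree exactly $N(r)$. Thus $m_\psi(\zeta_{0,r}) = N(r)$ in the type~II case. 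For the type~III case $r \notin |k^\times|$, the function $r \mapsto N(r)$ is locally constant (a jump would require $|c_n r^n| = |c_m r^m|$ for some $n \neq m$, forcing $r \in |k^\times|$), and the multiplicity is likewise constant on the interior of each edge of the skeleton by \cite[\S3]{Faber_Berk_RamI_2011}, so the equality propagates. The main obstacle is the reduction step: one must be comfortable reducing an infinite power series to a polynomial over $\tilde k$, and the key observation is that convergence $|c_n| r^n \to 0$, not present for an arbitrary formal series, is exactly the hypothesis needed to guarantee that the reduction is polynomial with the correct top degree.
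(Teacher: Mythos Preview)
Your argument is correct and takes a genuinely different route from the paper's proof. The paper argues indirectly: for $r' > r$ close to $r$, the map $\psi$ carries the open disk $D(0,r')^-$ onto an open disk $m$-to-$1$, where $m$ is simultaneously the number of zeros of $\psi$ in $D(0,r')^-$ and the directional multiplicity $m_\psi(\zeta_{0,r'}, \vec{0})$ (via \cite[Prop.~3.10]{Faber_Berk_RamI_2011}); it then uses upper semicontinuity of $m_\psi$ together with the monotonicity $m_\psi(\zeta_{0,r}) \leq m_\psi(\zeta_{0,r'})$ to squeeze $m_\psi(\zeta_{0,r}) = m_\psi(\zeta_{0,r'}, \vec{0})$ for $r'$ near $r$. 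Your approach instead identifies a common combinatorial invariant $N(r)$ (the last vertex on the relevant face of the Newton polygon) and checks directly that each side equals it: the zero count via Weierstrass preparation, the multiplicity via the Algebraic Reduction Formula at type~II points. Your method has the virtue of exhibiting an explicit formula for $m_\psi(\zeta_{0,r})$ in terms of the coefficients, while the paper's argument avoids the power-series machinery entirely and would adapt more readily to maps between curves without a preferred coordinate. One small point: your type~III step would be cleaner if, instead of invoking local constancy of $m_\psi$ along edges, you simply passed to an extension $K/k$ with $r \in |K^\times|$ and reran the type~II computation there (the coefficients $c_n$ are unchanged, and this is exactly how $m_\psi$ is defined at type~III points in \cite{Faber_Berk_RamI_2011}).
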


\begin{proof}
	The poles of a nonconstant rational function are discrete, and so $\psi$ has no pole in $D(0, 1 + \varepsilon)$ for some $\varepsilon > 0$. Consider $r'$ such that $0 < r < r' <1 + \varepsilon$. The hypotheses of the lemma guarantee that $\psi(D(0,r')^-)$ is an open disk $D(0,s')^-$ for some $s' \in \RR_{> 0}$, and that the induced map $D(0,r')^- \to D(0,s')^-$ is $m$-to-$1$ for some $m = m(r') \geq 1$.  In particular, $m$ is the number of zeros of $\psi$ in the disk $D(0,r')^-$.   On the other hand, we know that $m = m_\psi(\zeta_{0,r'}, \vec{0})$ \cite[Prop.~3.10]{Faber_Berk_RamI_2011}. But the zeros of $\psi$ are discrete, and so the number of zeros in $D(0,r')^-$ agrees with the number in $D(0,r)$ if $r'$ is sufficiently close to $r$. In particular, $m = m_\psi(\zeta_{0,r'}, \vec{0})$ is stable for $r'$ sufficiently close to $r$.  To complete the proof we must show that $m_\psi(\zeta_{0,r}) = m_\psi(\zeta_{0,r'}, \vec{0})$ if $r'$ is sufficiently close to $r$. 
	
	Observe that $m_\psi(\zeta_{0,r}) \leq m_\psi(\zeta_{0,r'})$  \cite[Prop.~3.11]{Faber_Berk_RamI_2011}, and that upper semicontinuity of the multiplicity function $m_\psi$ guarantees that
	\[
		\lim_{r' \to r} m_\psi(\zeta_{0,r'}) \leq m_\psi(\zeta_{0,r}).
	\]
Since the multiplicity may take on only a finite set of values, we find that $m_\psi(\zeta_{0,r}) = m_\psi(\zeta_{0,r'})$ for all $r'$ sufficiently close to $r$. Let us fix $r'$ with this property. 
	
	There exists $r''$ such that $r < r'' < r'$ and $m_\psi(\zeta_{0,r'}, \vec{0}) = m_\psi(\zeta_{0,r''})$ \cite[Prop.~3.9]{Faber_Berk_RamI_2011}. The previous paragraph shows that $m_\psi(\zeta_{0,r''}) = m_\psi(\zeta_{0,r})$, and so the proof is finished. 
\end{proof}

\begin{lem}
\label{Lem: Small Root Bound}
    Let $x$ be a type~II point, let $\vec{v} \in T_x \smallsetminus E_{\varphi, x}$, and write $U = \BB_x(\vec{v})^-$.
    Suppose $y \in U$ is a classical point, and write $r_y'$ for the minimum absolute value of a root of $A_\varphi(z,y)$;
    interpret $r_y'$ as $+\infty$ if $A_\varphi(z,y)$ is a constant polynomial. Then
    \be
    \label{Eq: Small Root Bound}
        r_y' \geq \left( \max_{\ell \geq 1} \left| \frac{A_\ell}{A_0}(x) \right|^{1/\ell} \right)^{-1}.
    \ee
If $\vec{v} \not\in E_{\varphi, x}'$, then equality holds in \eqref{Eq: Small Root Bound}.  
\end{lem}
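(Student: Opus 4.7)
My plan is to apply the non-Archimedean Newton polygon to $A_\varphi(z, y) = \sum_{\ell \geq 0} A_\ell(y) z^\ell$, viewed as a polynomial in $z$, and then to compare each coefficient $|A_\ell(y)|$ at the classical point $y$ to the value of the Berkovich seminorm $|A_\ell(x)|$ at $x$. First, I would observe that the hypothesis $\vec{v} \notin E_{\varphi, x}$ means $U$ contains no critical point of $\varphi$, and since $A_0 = \wronsk_\varphi$ by the preceding proposition has its zeros at the finite critical points, this gives $A_0(y) \neq 0$. The classical Newton polygon theorem then yields the exact equality
\[
    r_y' = \min_{\ell \geq 1}\left| \frac{A_0(y)}{A_\ell(y)} \right|^{1/\ell} = \left(\max_{\ell \geq 1} \left|\frac{A_\ell(y)}{A_0(y)}\right|^{1/\ell}\right)^{-1},
\]
with the convention that any $\ell$ with $A_\ell(y) = 0$ contributes nothing; if $A_\ell(y) = 0$ for all $\ell \geq 1$, then the expression is $+\infty$, matching the convention on $r_y'$.

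Next I would compare $|A_\ell(y)|$ to $|A_\ell(x)|$. Convention~2 places $\infty$ among the critical points, and so $\vec{v} \notin E_{\varphi, x}$ forces $\infty \notin U$; in particular, $U$ lies inside a finite-radius classical disk whose Shilov boundary is $\{x\}$. The maximum modulus principle for the polynomial $A_\ell$ then yields $|A_\ell(y)| \leq |A_\ell(x)|$. Moreover, $A_0$ has no zeros in $U$, so $|A_0|$ is constant on the open ball $U$ and must equal $|A_0(x)|$. Combining these two facts gives $|A_\ell(y)/A_0(y)| \leq |A_\ell(x)/A_0(x)|$ for each $\ell \geq 1$, and substituting into the formula above delivers the desired inequality \eqref{Eq: Small Root Bound}.

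For the equality claim under the stronger hypothesis $\vec{v} \notin E'_{\varphi, x}$, I would argue that no $A_\ell$, $\ell = 0, \ldots, d - 1$, has a zero in $U$. Factoring $A_\ell(w) = c \prod_i (w - \beta_i)$ with every $\beta_i \notin U$, the strong triangle inequality shows that $|y - \beta_i|$ depends only on the ball $U$ (not on the classical point $y \in U$), so $|A_\ell|$ is constant on $U$, and this constant necessarily equals $|A_\ell(x)|$. Therefore $|A_\ell(y)/A_0(y)| = |A_\ell(x)/A_0(x)|$ for all $\ell$, and the inequality becomes an equality. The main technical input is identifying $r_y'$ exactly with the leftmost-slope formula of the Newton polygon, a step requiring $A_0(y) \neq 0$; Convention~2 is essential here precisely because it guarantees that the polynomials $A_\ell$ have no pole inside $U$, so that the maximum modulus comparison is legitimate.
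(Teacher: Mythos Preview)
Your proof is correct and follows essentially the same approach as the paper's: both identify $r_y'$ via the slope of the first face of the Newton polygon of $A_\varphi(z,y)$, then argue that $|A_0(y)|$ is fixed (since $A_0 = \wronsk_\varphi$ has no zeros in $U$) while $|A_\ell(y)| \leq |A_\ell(x)|$ in general, with equality when no $A_\ell$ has a root in $U$. The only stylistic difference is that the paper compares $y$ to a generic classical point $y_0 \in \BB_x(\vec{v}_0)^-$ with $\vec{v}_0 \notin E'_{\varphi,x}$ and then passes to the limit to reach $x$, whereas you invoke the maximum modulus principle (Shilov boundary) to compare $|A_\ell(y)|$ directly with $|A_\ell(x)|$; your phrasing is marginally more streamlined but the underlying argument is the same.
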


\begin{remark}
	Since the right side of \eqref{Eq: Small Root Bound} is finite, the final statement of the lemma implies that $r_y' = +\infty$ can only occur if $y \in \BB_x(\vec{v})^-$ with $\vec{v} \in E_{\varphi, x}'$. 
\end{remark}

\begin{proof}
      Write $c_1, \ldots, c_n$ for the distinct finite critical points of $\varphi$. As $\vec{v} \in T_x \smallsetminus E_{\varphi, x}$, no $c_i$ lies in~$U$. Let $x = \zeta_{a,r}$. Now $\vec{v}$ cannot point toward $\infty$ since the latter is a critical point; so $U = \BB_x(\vec{v})^- \subset \DD(a,r)$. If $|c_i - a|\leq r$, then $|y - c_i| = r$ (else $\vec{v} \in E_{\varphi,x}$). But if $|c_i - a| > r$, then $|y - c_i| = |c_i - a|$. Write $\wronsk_\varphi(w) = C \prod (z - c_i)^{w_\varphi(c_i)}$ for some nonzero $C \in k$. Then
        \benn
            \ba
            |A_0(y)| = |\wronsk_\varphi(y)| &= |C| \prod_i |y - c_i|^{w_\varphi(c_i)} \\
                &= |C| \prod_{i \ : \ |c_i - a| \leq r} r^{w_\varphi(c_i)} \cdot 
                    \prod_{i \ : \ |c_i - a| > r} |c_i - a|^{w_\varphi(c_i)}.
            \ea
        \eenn
This last expression is independent of the choice of $\vec{v} \in T_x \smallsetminus E_{\varphi, x}$ and $y \in \BB_x(\vec{v})^-$. So the first vertex of the Newton polygon of $A_\varphi(z, y)$ (as a polynomial in $z$) is also independent of these choices.

    An argument similar to the one in the last paragraph shows that all other vertices of the Newton polygon of $A_\varphi(z,y)$ are stable as one varies $\vec{v} \in T_x \smallsetminus E_{\varphi, x}'$ and $y \in \BB_x(\vec{v})^-$. In particular, if $\mu$ is the slope of the first face of the resulting Newton polygon, then $\mu$ does not depend on these choices. The Newton polygon is a lower convex hull, so
    \benn
        \ba
        \mu &= \min_{\ell \geq 1} \frac{\ord_k (A_\ell(y)) - \ord_k(A_0(y))}{\ell} \\
            &= - \max_{\ell \geq 1} \ \log_{q_k} \left| \frac{A_\ell(y)}{A_0(y)} \right|^{1 / \ell} \\
            &= - \log_{q_k} \max_{\ell \geq 1} \left| \frac{A_\ell}{A_0}(x) \right|^{1 / \ell}.
        \ea
    \eenn
The last line follows by continuity and the fact that $\{x\} = \partial U$ is a limit point of such $y$ for the weak topology. We conclude that for any $\vec{v} \in T_x \smallsetminus E_{\varphi, x}'$ and any $y \in \BB_x(\vec{v})^-$, 
    \[
        r_y' = q_k^\mu = \left( \max_{\ell \geq 1} \left| \frac{A_\ell}{A_0}(x) \right|^{1/\ell} \right)^{-1}. 
    \]

    Now let $\vec{v}_0 \in T_x \smallsetminus E_{\varphi, x}'$ and $y_0 \in \BB_x(\vec{v}_0)^-$, which we view as generic choices of these parameters. 
Observe that if $\vec{v} \in E_{\varphi, x}' \smallsetminus E_{\varphi, x}$, and if $y \in \BB_x(\vec{v})^-$, then 
     \begin{eqnarray*}
        |A_\ell(y)| &\leq& |A_{\ell}(y_0)|,\  \ell = 1, \ldots, d-1 \\
        |A_0(y)| & = & |A_0(y_0)|.
     \end{eqnarray*}
Indeed, $y$ is allowed to be closer to a root of $A_\ell$ than $y_0$ is. So the first face of the Newton polygon of $A_\varphi(z,y)$ must lie on or above that of $A_\varphi(z,y_0)$, while they share the same first vertex. In particular, the smallest root of $A_\varphi(z,y)$ (if it exists) has absolute value at least that of $A_\varphi(z,y_0)$, and the calculation in the previous paragraph gives \eqref{Eq: Small Root Bound}.
\end{proof}

\begin{lem}
\label{Lem: Diameter bound}
	Let $x \in \Ram_\varphi$ be a type~II point, let $\vec{v} \in T_x \smallsetminus E_{\varphi, x}'$, and let $y \in \BB_x(\vec{v})^-$ be a classical point. Then the smallest root of $A_\varphi(z,y)$ has absolute value at most $\diam(x)$, with equality precisely when $\varphi$ has separable reduction at $x$. 
\end{lem}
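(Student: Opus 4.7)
The plan is to determine the smallest root of $A_\varphi(z,y)$ from the Newton polygon of this polynomial in $z$, and to compare the resulting condition with the reduction $\tilde\varphi$ of $\varphi$ at $x$. Using $\PGL_2(k)$ to change coordinates on the source (which scales roots of $A_\varphi(z,y)$ linearly with $\diam(x)$) and on the target (which scales $A_\varphi$ by a unit, as noted in the proof of Lemma~\ref{Lem: Ramification Segment}), I may assume $x = \zeta_{0,1}$ and $\varphi(x) = \zeta_{0,1}$, so that $\diam(x) = 1$. Write $\varphi = f/g$ with $f,g \in \OO_k[z]$ of Gauss norm one, and let $\tilde f, \tilde g \in \tilde k[z]$ be their reductions, so that $\tilde\varphi := \tilde f/\tilde g$ is a nonconstant rational function of degree $m = m_\varphi(x) \geq 2$. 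Set $d' := \max(\deg \tilde f, \deg \tilde g) \geq m \geq 2$. Integrality forces $A_\ell \in \OO_k[w]$, so $|A_\ell(y)| \leq 1$ for classical $y \in \OO_k$; moreover, for $y \in \BB_x(\vec v)^-$ classical with $\vec v \notin E'_{\varphi,x}$, the argument in the proof of Lemma~\ref{Lem: Small Root Bound} shows that $|A_\ell(y)|$ equals the Gauss norm of $A_\ell$, which in turn equals one if and only if the reduction $\tilde A_\ell$ is a nonzero polynomial in $\tilde k[w]$.

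The smallest root $r_y'$ of $A_\varphi(z,y) = \sum_\ell A_\ell(y) z^\ell$ has absolute value $q_k^\mu$, where $\mu$ is the slope of the first face of its Newton polygon; thus $r_y' \leq 1$ is equivalent to the Gauss norm of $A_\varphi(z,y)$ being attained at some index $\ell \geq 1$. A direct inspection of the explicit formula shows that the reduction $\tilde A_{d'-1}$ coincides with the $(d'-1)$-st coefficient $A^{(\tilde f, \tilde g)}_{d'-1}$ of the auxiliary polynomial formed from the pair $(\tilde f, \tilde g)$ at formal degree $d'$ (terms with $i > d'$ or $j > d'$ drop out because $\tilde a_i, \tilde b_j$ vanish there). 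The proof of the first proposition of this section applies to any pair of polynomials whose ratio is nonconstant, independent of a coprimality hypothesis, so nonvanishing of $A^{(\tilde f, \tilde g)}_{d'-1}$ follows from the fact that $\tilde\varphi$ is nonconstant. Hence $|A_{d'-1}(y)| = 1$, the Gauss norm of $A_\varphi(z,y)$ equals one and is attained at $\ell = d'-1 \geq 1$, giving $r_y' \leq 1 = \diam(x)$.

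For the equality criterion, $r_y' = 1$ holds if and only if the first face of the Newton polygon has slope zero, equivalently $|A_0(y)| = 1$, equivalently $\tilde A_0 \not\equiv 0$ in $\tilde k[w]$. Since $A_0 = \wronsk_\varphi = f'g - fg'$, the reduction is $\tilde A_0 = \tilde f'\tilde g - \tilde f\tilde g'$. Writing $\tilde h = \gcd(\tilde f, \tilde g) \in \tilde k[z]$ and $\tilde u = \tilde f/\tilde h$, $\tilde v = \tilde g/\tilde h$ so that $\tilde\varphi = \tilde u/\tilde v$ is in lowest terms, the Leibniz rule yields $\tilde A_0 = \tilde h^2(\tilde u'\tilde v - \tilde u\tilde v')$. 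As $\tilde h \neq 0$, the polynomial $\tilde A_0$ vanishes identically if and only if the Wronskian of $\tilde\varphi$ does, i.e., if and only if $\tilde\varphi$ is inseparable, which is precisely the condition that $\varphi$ has inseparable reduction at $x$. Combining the two directions gives $r_y' = \diam(x)$ exactly when $\varphi$ has separable reduction at $x$. The main obstacle is the identification in the second paragraph of the reduction $\tilde A_{d'-1}$ with the corresponding coefficient for the reduced pair at the proper total degree $d'$---which may be strictly less than $d$---so that the first proposition can be invoked at the correct degree to conclude nonvanishing.
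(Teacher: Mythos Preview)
Your argument is correct and takes a genuinely different route from the paper.

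The paper proceeds geometrically and splits into two cases. When $\varphi$ has inseparable reduction at $x$, it uses that $x$ is a strong interior point of $\Ram_\varphi$, picks a nearby point $\xi$ at which inseparable reduction persists, chooses a direction there with $\varphi(\BB_\xi(\vec w)^-) \neq \Berk$, and invokes Lemmas~\ref{Lem: Ramification Segment} and~\ref{Lem: Small Root Bound} to obtain $r_y' = r_{y_1} < \diam(x)$. When $\varphi$ has separable reduction, it uses surjectivity of $\varphi_*$ to locate two directions $\vec v, \vec w$ with the same image, producing an explicit $y_1$ with $\varphi(y_1) = \varphi(y)$ and $|y_1 - y| = \diam(x)$; this gives a root of $A_\varphi(z,y)$ of that size, and a smaller root is ruled out by showing it would force noninjectivity on $\BB_x(\vec v)^-$, contradicting separable reduction.

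You instead normalize to $x = \zeta_{0,1}$, $\varphi(x) = \zeta_{0,1}$ and read the Newton polygon of $A_\varphi(z,y)$ directly off the reductions $\tilde A_\ell$. The crucial point, which you identify correctly, is that $\tilde A_{d'-1}$ coincides with the top coefficient of the auxiliary polynomial for the reduced pair $(\tilde f,\tilde g)$ at the correct formal degree $d' = \max(\deg\tilde f,\deg\tilde g)$, so the first proposition (whose nonvanishing argument uses only nonconstancy, not coprimality) gives $|A_{d'-1}(y)| = 1$; since $d' \geq m_\varphi(x) \geq 2$ this index is $\geq 1$, forcing $r_y' \leq 1$. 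The equality criterion then drops out of $\tilde A_0 = \tilde h^2\,\wronsk_{\tilde\varphi}$. Your approach is more self-contained: it avoids the appeal to strong interior points, to Lemma~\ref{Lem: Ramification Segment}, and to the surjectivity of $\varphi_*$, and it makes transparent exactly how the reduction $\tilde\varphi$ controls the polygon. The paper's argument, by contrast, stays inside the geometric framework of the ramification locus and reuses the preceding lemmas, which is natural in context but less direct.
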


\begin{proof}
	If $\varphi(y) = \infty$, then we replace $\varphi$ with $1 / \varphi(z)$ so that $\varphi(y) = 0$. As in the proof of Lemma~\ref{Lem: Ramification Segment}, this change of coordinate multiplies $A_\varphi$ by a nonzero constant (in this case~$-1$), and it has no effect on the ramification locus of $\varphi$. 
	
	We first consider the case in which $\varphi$ has inseparable reduction at $x$. Then $x$ is a strong interior point of the ramification locus \cite[Prop.~7.4]{Faber_Berk_RamI_2011}, so that $r_y < \diam(x)$. Here $r_y$ is defined as in the statement of Lemma~\ref{Lem: Ramification Segment}. Write $x = \zeta_{a,R}$, where $R = \diam(x)$. Choose $1 + \varepsilon \in |k|^\times$ with $\varepsilon > 0$ small enough that $r_y(1 + \varepsilon) < R$, and let $\xi = \zeta_{y, r_y (1+ \varepsilon)}$. Then $\varphi$ has inseparable reduction at $\xi$ \cite[Prop.~6.6]{Faber_Berk_RamI_2011}. In particular, there are infinitely many directions $\vec{w} \in T_\xi$ such that $m_\varphi(\xi, \vec{w}) > 1$. Choose one satisfying $\varphi(\BB_{\xi}(\vec{w})^-) \neq \Berk$, and let $y_1 \in \BB_\xi(\vec{w})^-$. Then $\varphi(\DD(y_1, r_{y_1})) \neq \Berk$, so that
	\benn
		\ba
			r_y' &= r_{y_1}'  \qquad (\text{Lemma}~\ref{Lem: Small Root Bound}) \\
				&=  r_{y_1} \qquad (\text{Lemma}~\ref{Lem: Ramification Segment}) \\
				& < \diam(x).
		\ea
	\eenn
The proof is complete in this case.

	Now suppose that $\varphi$ has separable reduction at $x$. We know that $m_\varphi(x) > 1$ by hypothesis. So the map $\varphi_*: T_x \to T_{\varphi(x)}$ is surjective with degree at least~2 \cite[Prop.~3.14]{Faber_Berk_RamI_2011}. Since the absolute value of the smallest root $r_y'$ is independent of the choice of $\vec{v} \in T_x \smallsetminus E_{\varphi, x}'$ and $y \in \BB_x(\vec{v})^-$ (Lemma~\ref{Lem: Small Root Bound}), we are free to make other choices in order to simplify the proof. After choosing another $\vec{v}$ if necessary, we may select a second tangent direction $\vec{w} \in T_x \smallsetminus E_{\varphi, x}'$ such that $\varphi_*(\vec{v}) = \varphi_*(\vec{w})$ and so that no pole lies in $\BB_x(\vec{v})^-$ or $\BB_x(\vec{w})^-$. In particular, for $y \in \BB_x(\vec{v})^-$, there is $y_1 \in \BB_x(\vec{w})^-$ with $\varphi(y) = \varphi(y_1)$. 
	
	 Write $x = \zeta_{a,R}$ as above. Since $y, y_1$ lie in distinct directions at $x$, we find that $|y_1 - y| = R$. Then
	\[
		A_\varphi(y_1 - y,y) = (y_1 - y)^{-1}\left[f(y_1) g(y) - f(y)g(y_1)\right] = 0.
	\]
Hence $y_1 - y$ is a root of $A_\varphi(z, y)$. Let $\alpha$ be a root with minimum absolute value. Then we have just proved that $|\alpha| = r_y' \leq R = \diam(x)$.  It remains to show that equality holds. 

	If $|\alpha| < R$, define $y_2 = y + \alpha$, so that $y_2 \in \DD(y, R)^- = \BB_x(\vec{v})^-$. Then
	\benn
		0 = A_\varphi(\alpha, y) = \alpha^{-1} \left[f(\alpha + y) g(y) - f(y)g(\alpha + y)\right] 
			= \alpha^{-1} g(y) g(y_2)\left[ \varphi(y_2) - \varphi(y)\right].
	\eenn
Note that since $\BB_x(\vec{v})^-$ contains no pole of $\varphi$, the denominators $g(y)$ and $g(y_2)$ do not vanish. Hence $\varphi(y_2) = \varphi(y)$, which means $\varphi$ is not injective on the open Berkovich disk $\BB_x(\vec{v})^-$, and hence $\BB_x(\vec{v})^- \cap \Ram_\varphi$ is nonempty \cite[Cor.~3.8]{Faber_Berk_RamI_2011}. As $\BB_x(\vec{v})^-$ contains no critical point, we find that $\varphi$ has inseparable reduction at $x$ \cite[Prop.~6.6]{Faber_Berk_RamI_2011}, which is a contradiction.  
We are forced to conclude that $r_y' = |\alpha| = R$. 
\end{proof}

\begin{proof}[Proof of Proposition~\ref{Prop: Tau properties}]
	The visible ramification $\tau_\varphi$ is identically zero on the strong open set $\HH \smallsetminus \Ram_\varphi$, so it is trivially continuous there. Note also that $\varphi$ has separable reduction at any $x \in \HH \smallsetminus \Ram_\varphi$ since $m_\varphi(x) = 1$. 
	
	Evidently the affine diameter is continuous and non-vanishing on $\HH$ for the strong topology, so that $\log_{q_k} \diam(x)$ is a continuous function of $x$. When restricted to any finite subgraph $\Gamma$, it is also piecewise affine. Each nonzero rational function $A_\ell / A_0$ gives rise to a function $\log_{q_k} \left| A_\ell / A_0 \right|$ that is harmonic with respect to the Laplacian operator $\Delta$ away from the roots of $A_0$ and $A_\ell$ (which do not lie in $\HH$); see \cite[Ch.3, \S3.2]{AWS_2008}. The functions $\log_{q_k}\left| A_\ell / A_0 \right|^{1/\ell}$ are continuous for the strong topology on $\HH$ and piecewise affine on any finite subgraph $\Gamma \subset \HH$,  and  so their maximum $(1 \leq \ell \leq d-1$) must be as well. This completes the proof of~\eqref{Item: Finite Subgraph} and the continuity part of~\eqref{Item: Strong Continuous}. 
	
	Now let $x \in \Ram_\varphi$ be a type~II point, let $\vec{v} \in T_x \smallsetminus E_{\varphi,x}'$ be a tangent direction, and let $y \in \BB_x(\vec{v})^-$ be an arbitrary classical point. Write $r_y'$ for the minimum absolute value of a root of $A_\varphi(z,y)$. Then Lemmas~\ref{Lem: Small Root Bound} and~\ref{Lem: Diameter bound} show that
	\[
		 \left( \max_{\ell \geq 1} \left| \frac{A_\ell}{A_0}(x) \right|^{1/\ell} \right)^{-1} = r_y' \leq \diam(x).
	\]
Dividing by $\diam(x)$ and applying $- \log_{q_k}$ yields
	\be
	\label{Eq: tau inequalities}
		0 \leq \log_{q_k} \frac{\diam(x)}{r_y'} = \tau_\varphi(x). 
	\ee
Hence $\tau_\varphi$ is nonnegative at all type~II points of $\Ram_\varphi$. By continuity it is nonnegative on the entire ramification locus, and the proof of~\eqref{Item: Strong Continuous} is complete. 
	
	Keep the notation from the last paragraph. We saw in Lemma~\ref{Lem: Diameter bound} that $r_y' = \diam(x)$ if and only if $\varphi$ has separable reduction at $x$. Applying this observation to \eqref{Eq: tau inequalities} shows that these conditions are equivalent to saying $\tau_\varphi(x) = 0$. This finishes the proof of~\eqref{Item: Vanishing} when $x$ is a type~II point in the ramification locus. 
	
	Now let $x \in \Ram_\varphi$ be an arbitrary point. Let $K / k$ be an extension of algebraically closed and complete non-Archimedean fields such that $x_K = \iota_k^K(x)$ is a type~II point of $\Berk_K$. (See  \cite[\S4]{Faber_Berk_RamI_2011} for notation.) Write $\tau_K: \HH_K \to \RR$ for the visible ramification of the function $\varphi_K \in K(z)$ given by extension of scalars. The coefficients of the polynomials $A_\ell$ lie in $k$, and hence $\tau_K(x_K) = \tau_\varphi(x)$. Our work in the previous paragraph implies that $\tau_K(x_K) = 0$ if and only if $\varphi_K$ has separable reduction at $x_K$, which by definition is equivalent to saying that $\varphi$ has separable reduction at $x$. Thus part~\eqref{Item: Vanishing} of the proposition is proved. 
	
	Last, we prove~\eqref{Item: Support}. Since $\Berk \smallsetminus \Ram_\varphi$ is open, and since $\tau_\varphi$ vanishes identically there by definition, we must have $\supp(\tau_\varphi) \subset \Ram_\varphi$. 
	
	If $x$ is a strong interior point of the ramification locus $\Ram_\varphi$, then $\varphi$ has inseparable reduction at~$x$ \cite[Thm.~B]{Faber_Berk_RamI_2011}. It follows that $p \mid m_\varphi(x, \vec{v})$ for all directions $\vec{v} \in T_x$, and that $\tau_\varphi(x) > 0$ by part~\eqref{Item: Vanishing}.
	
	For the remainder of the proof we assume that $x \in \Ram_\varphi$ is not a strong interior point of the ramification locus. If $x \in \Ram_\varphi \smallsetminus \Hull(\Crit(\varphi))$, then $x$ is an endpoint of $\Ram_\varphi$ \cite[Cor.~7.6]{Faber_Berk_RamI_2011}. So $x$ must be of type~II or type~IV, and there is a segment $(x,x') \subset \Ram_\varphi$ consisting entirely of points at which $\varphi$ has inseparable reduction \cite[Prop.~7.3]{Faber_Berk_RamI_2011}. Then $\tau_\varphi$ is positive along this whole segment, and hence $x \in \supp(\tau_\varphi)$. As $p \mid m_\varphi(y)$ for each $y \in (x,x')$, we see that $p \mid m_\varphi(x, \vec{v})$, where $\vec{v}$ is the direction toward $x'$ \cite[Prop.~3.9]{Faber_Berk_RamI_2011}. 
	
	Finally, suppose that $x \in \Ram_\varphi \cap \Hull(\Crit(\varphi))$. After passing to an extension field $K / k$ if necessary and replacing $x$ by $x_K$ as above, we may assume that $x$ is a type~II point. If $p \mid m_\varphi(x, \vec{v})$ for some direction $\vec{v}$, then there is a point $x' \in \BB_x(\vec{v})^-$ such that $\varphi$ has inseparable reduction at all points of the segment $(x,x')$ \cite[Lem.~7.2]{Faber_Berk_RamI_2011}. Hence $x \in \supp(\tau_\varphi)$. But if there does not exist $\vec{v} \in T_x$ such that $p \mid m_\varphi(x, \vec{v})$, then in particular $\varphi$ has separable reduction at $x$. There are only finitely many directions $\vec{v} \in T_x$ such that $\Ram_\varphi \cap \BB_x(\vec{v})^-$ is nonempty since each such direction must contain a critical point \cite[Thm.~A]{Faber_Berk_RamI_2011}. Let $\vec{v}_1, \ldots, \vec{v}_n$ be a complete list of these directions. Choose points $x_1, \ldots, x_n$ near $x$ so that $x_i \in \Hull(\Crit(\varphi)) \cap \BB_x(\vec{v}_i)^-$, so that $p \nmid m_\varphi(y)$ for any $y \in (x, x_i)$, $i = 1, \ldots, n$, and so that the segment $(x, x_i)$ contains no branch point of the connected hull of the critical points. Set $\varepsilon = \min \{\rho(x, x_i) : i = 1, \ldots, n\}$, and write $V_\varepsilon = \{y \in \HH : \rho(x,y) < \varepsilon\}$. Our construction shows $V_\varepsilon\cap \Hull(\Crit(\varphi))$ contains no point with inseparable reduction, and hence $V_\varepsilon \cap \Ram_\varphi = [x, x_1) \cup \cdots \cup [x, x_n)$ by \cite[Prop.~6.6]{Faber_Berk_RamI_2011}. In particular, $V_\varepsilon$ contains no strong interior point of $\Ram_\varphi$, so that $\tau_\varphi$ is identically zero on $V_\varepsilon$. Hence $x \not\in \supp(\tau_\varphi)$.
\end{proof}

\begin{proof}[Proof of Proposition~\ref{Prop: Ramification Bounded by Tau}]
	Observe that $U \cap \Hull(\Crit(\varphi)) = \emptyset$ because $\vec{v} \not\in E_{\varphi,x}$. If $U \cap \Ram_\varphi$ is nonempty, then $\varphi$ has inseparable reduction at $x$ \cite[Prop.~6.6]{Faber_Berk_RamI_2011}. Conversely, if $\varphi$ has inseparable reduction at $x$, then $x$ is an interior point of $\Ram_\varphi$ for the strong topology \cite[Prop.~7.4]{Faber_Berk_RamI_2011}; it follows that $U \cap \Ram_\varphi$ is nonempty. Hence $\varphi$ has separable reduction at $x$ if and only if $U \cap \Ram_\varphi = \emptyset$ if and only if $\tau_\varphi(x) = 0$ (Proposition~\ref{Prop: Tau properties}). This completes the proof when $\varphi$ has separable reduction at $x$. 
	
	Now suppose that $\varphi$ has inseparable reduction at $x$, so that $U \cap \Ram_\varphi$ is nonempty. Let $\zeta_{a,r} \in U \cap \Ram_\varphi$ be a type~II point. As each connected component of $\Ram_\varphi$ contains a critical point \cite[Thm.~A]{Faber_Berk_RamI_2011}, we have that $[\zeta_{a,r}, x] \subset \Ram_\varphi$. Choose $\varepsilon >0$ sufficiently small so that $1+\varepsilon \in |k^\times|$ and so that $\zeta_{a, r(1+ \varepsilon)}$ lies on the segment $ (\zeta_{a,r}, x)$. Write $\xi = \zeta_{a, r(1+ \varepsilon)}$. As in the proof of Lemma~\ref{Lem: Diameter bound}, we may select a tangent direction $\vec{w} \in T_\xi$ such that $\varphi(\BB_\xi(\vec{w})^-) \neq \Berk$. Let $y \in \BB_\xi(\vec{w})^-$ be any classical point. Then $\varphi(\DD(y, r_y)) \neq \Berk$ by construction, and so Lemmas~\ref{Lem: Ramification Segment} and~\ref{Lem: Small Root Bound} imply that
	\[
		\left(\max_{\ell \geq 1} \left| \frac{A_\ell}{A_0}(x) \right|^{1 / \ell} \right)^{-1} \leq r_y' = r_y 
			< r(1+ \varepsilon). 
	\] 
Dividing by $\diam(x)$ and applying $-\log_{q_k}$ gives
	\[
		\rho(x, \xi) = \log_{q_k} \frac{\diam(x)}{r(1+\varepsilon)} < \tau_\varphi(x).
	\]
This constructs works for all sufficiently small $\varepsilon > 0$; hence, 
	\[
		\rho(x, \zeta_{a,r}) = \log_{q_k} \frac{\diam(x)}{r} \leq \tau_\varphi(x).
	\]
We conclude that $\rho(x, \zeta) \leq \tau_\varphi(x)$ for all $\zeta \in U \cap \Ram_\varphi$ since type~II points are dense in this set for the strong topology.

	It remains to show that when $\vec{v} \not\in E_{\varphi,x}'$, there exists a point $\zeta \in U\cap \Ram_\varphi$ such that $\rho(x,\zeta) = \tau_\varphi(x)$. In fact, Lemmas~\ref{Lem: Ramification Segment} and~\ref{Lem: Small Root Bound} imply that the point $\zeta_{y, r_y}$ from the last paragraph does the trick. 
\end{proof}


\subsection{Alternate Definition of the Visible Ramification}
\label{Sec: Alternate}

	To calculate the value of the visible ramification at a point $x \in \HH$ using the definition at the beginning of the last section, one is required to know \textit{a priori} if $x$ lies in the ramification locus or not. The ramification locus can be quite complicated, so it is desirable to avoid this issue when computing examples. To that end, define a function $\mathfrak{t}_\varphi : \HH \to \RR$ by the formula
	\[
		\mathfrak{t}_\varphi(x) = \log_{q_k} \diam(x) 
			+ \max_{\ell \geq 1} \log_{q_k}\left| \frac{A_\ell}{A_0}(x) \right|^{1/\ell}.
	\]

\begin{prop}
\label{Prop: Alternate}
	Let $\varphi \in k(z)$ be a nonconstant rational function. Then
	\[
		\mathfrak{t}_\varphi(x)  \begin{cases}
					\geq 0 & \text{if } x \in \Ram_\varphi \\
					= 0 & \text{if } x \in \Hull(\Crit(\varphi)) \smallsetminus \Ram_\varphi \\
					< 0 & \text{if } x \not\in \Hull(\Crit(\varphi)) \cup \Ram_\varphi.
			\end{cases}
	\] 
In particular, we have the identity of functions $\tau_\varphi = \max\{\mathfrak{t}_\varphi, 0\}$.
\end{prop}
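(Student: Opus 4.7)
The proof rests on a geometric reinterpretation of $\mathfrak{t}_\varphi(x)$. By Lemma~\ref{Lem: Small Root Bound}, for any $\vec{v} \in T_x \smallsetminus E_{\varphi,x}'$ and any classical $y \in \BB_x(\vec{v})^-$,
\[
    \mathfrak{t}_\varphi(x) = \log_{q_k}\bigl(\diam(x)/r_y'\bigr),
\]
where $r_y'$ is the minimum absolute value of a root of $A_\varphi(z,y)$. For generic $y$ with $\varphi(y) \neq \infty$, these roots are precisely the nonzero displacements $y' - y$ to other preimages $y'$ of $\varphi(y)$. Setting $R := \diam(x)$, each of the three cases is therefore a comparison of $r_y'$ with $R$.

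Case~1 follows at once from Proposition~\ref{Prop: Tau properties}(\ref{Item: Strong Continuous}), since $\mathfrak{t}_\varphi = \tau_\varphi$ on $\Ram_\varphi$ by definition. For the remaining cases I have $x \notin \Ram_\varphi$, so $m_\varphi(x) = 1$ and the tangent map $\varphi_* : T_x \to T_{\varphi(x)}$ is a bijection; an extension-of-scalars argument as in the proof of Proposition~\ref{Prop: Tau properties} reduces to $x$ of type~II. The key uniform fact I will use is: for any tangent direction $\vec{w} \in T_x$ containing no critical point, the open disk $\BB_x(\vec{w})^-$ meets no component of $\Ram_\varphi$ (by \cite[Thm.~A]{Faber_Berk_RamI_2011}: any such component would have to contain a critical point or else exit through $x$ and thereby place $x \in \Ram_\varphi$). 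Hence $\varphi|_{\BB_x(\vec{w})^-}$ is \'etale, and combined with the bijectivity of $\varphi_*$ its image lies in $\BB_{\varphi(x)}(\varphi_*(\vec{w}))^-$, which is disjoint from $\BB_{\varphi(x)}(\varphi_*(\vec{v}))^- \ni \varphi(y)$. Consequently, no preimage of $\varphi(y)$ other than $y$ itself can lie in a critical-point-free tangent direction at $x$.

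In Case~3, Convention~2 forces the lone critical tangent direction at $x$ to equal $\vec{\infty}$, so every preimage of $\varphi(y)$ besides $y$ lies in $\BB_x(\vec{\infty})^-$; since $|y - a| = R$ for $\vec{v}$ a finite direction while $|y' - a| > R$ for $y' \in \BB_x(\vec{\infty})^-$, the ultrametric gives $|y' - y| > R$. Thus $r_y' > R$ and $\mathfrak{t}_\varphi(x) < 0$. In Case~2, the same observation yields the lower bound $r_y' \geq R$. For the matching upper bound $r_y' \leq R$, which is the principal obstacle, the hypothesis $x \in \Hull(\Crit(\varphi))$ supplies at least two distinct finite tangent directions $\vec{w}_1, \vec{w}_2 \in T_x$ containing critical points; any classical $y' \in \BB_x(\vec{w}_j)^-$ automatically satisfies $|y' - y| = R$ by the ultrametric across distinct finite directions. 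My plan is to construct such a $y'$ with $\varphi(y') = \varphi(y)$ by exploiting that $\varphi|_{\BB_x(\vec{w}_j)^-}$ fails to be globally injective---its interior critical point raises the generic degree above the infinitesimal value $m_\varphi(x,\vec{w}_j) = 1$---so its image in $\Berk$ extends widely enough to reach $\varphi(y)$ for generic $y$. The natural route for making this rigorous is a Newton-polygon analysis of $A_\varphi(z,y)$ at $x$ based on the factorization $A_0(w) = \wronsk_\varphi(w) = C \prod (w - c_i)^{w_\varphi(c_i)}$, which pins $|A_0(x)|$ and produces a matching first slope from the critical clusters in $\vec{w}_1, \vec{w}_2$.

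The identity $\tau_\varphi = \max\{\mathfrak{t}_\varphi, 0\}$ follows immediately: on $\Ram_\varphi$, $\tau_\varphi = \mathfrak{t}_\varphi \geq 0$ (Case~1); off $\Ram_\varphi$, $\tau_\varphi \equiv 0 \geq \mathfrak{t}_\varphi$ by Cases~2 and~3.
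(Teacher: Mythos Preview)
Your treatment of Case~1 and the final identity match the paper. Your Case~3 argument is a legitimate alternative to the paper's: rather than enlarging the disk slightly and invoking injectivity on $\DD(y,R+\varepsilon)^-$, you classify the finite preimages of $\varphi(y)$ by tangent direction at $x$ and observe that all of them except $y$ lie in $\BB_x(\vec{\infty})^-$, hence at distance $>R$ from $y$. This is correct and arguably more conceptual.

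The genuine gap is in Case~2, specifically the upper bound $r_y' \leq R$. First, your claim that $x \in \Hull(\Crit(\varphi))$ supplies \emph{two} finite critical directions is false: since $\infty$ is critical (Convention~2), only one finite critical direction is guaranteed, e.g., when $x$ lies on the segment from a single finite critical point toward the rest of the critical set. Second, and more importantly, your construction of a preimage $y'$ at distance exactly $R$ is only sketched. Knowing that $\varphi|_{\BB_x(\vec{w})^-}$ is not injective does not by itself tell you that its image hits $\varphi(y)$, and the proposed Newton-polygon analysis of $A_\varphi(z,y)$ would require controlling $|A_\ell(y)|$ for $\ell \geq 1$, not just $|A_0(y)|$, which the Wronskian factorization does not provide.

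The paper closes this gap with a single clean fact you are missing: if $\BB_x(\vec{w})^-$ contains a critical point, then the surplus multiplicity $s_\varphi(\BB_x(\vec{w})^-)$ is positive, and hence $\varphi(\BB_x(\vec{w})^-) = \Berk$ \cite[Prop.~3.18]{Faber_Berk_RamI_2011}. This immediately yields a $y_1 \in \BB_x(\vec{w})^-$ with $\varphi(y_1) = \varphi(y)$, so $|y_1 - y| = R$ and $r_y' \leq R$. One finite critical direction $\vec{w}$ suffices; two are unnecessary.
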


\begin{remark}
	In practice, for a given $\varphi$ it is straightforward to compute $\Ram_\varphi \cap \Hull(\Crit(\varphi))$ using the Algebraic Reduction Formula \cite[Prop.~3.14]{Faber_Berk_RamI_2011} and explicit coordinate changes. It is a combinatorial exercise to compute $\mathfrak{t}_\varphi$. The above proposition shows $\Ram_\varphi \smallsetminus \Hull(\Crit(\varphi))$ can then be determined by looking at the locus where $\mathfrak{t}_\varphi$ is nonnegative. Compare with Example~\ref{Ex: Pathology}. 
\end{remark}

\begin{proof}
	As in the proof of Proposition~\ref{Prop: Tau properties}, we may replace $k$ with a larger field if necessary in order to assume that $\HH$ consists only of type~II points. 

	For $x$ in the ramification locus, we note that $\mathfrak{t}_\varphi(x) = \tau_\varphi(x)$, so the result follows from Proposition~\ref{Prop: Tau properties}\eqref{Item: Strong Continuous}.

	For $x \in \Hull(\Crit(\varphi)) \smallsetminus \Ram_\varphi$, choose a direction $\vec{v} \in T_x$ so that $\BB_x(\vec{v})^-$ does not intersect the ramification locus. Write $x = \zeta_{y,R}$ for some $y \in k$ and $R > 0$, where $y$ is not a pole of $\varphi$.  Now $\varphi$ is injective on the open disk $\DD(y,R)^-$ since it does not meet $\Ram_\varphi$ \cite[Cor.~3.8]{Faber_Berk_RamI_2011}. As $x$ lies on the connected hull of the critical points, there exists another direction $\vec{w}$ such that $\BB_x(\vec{w})^-$ does not contain $\infty$, but it does contain a critical point. In particular, the surplus multiplicity $s_\varphi(\BB_x(\vec{w})^-)$ is positive \cite[Prop.~3.18]{Faber_Berk_RamI_2011}, which means $\varphi(\BB_x(\vec{w})^-) = \Berk$. So there exists a type~I point $y_1 \in \BB_x(\vec{w})^-$ such that $\varphi(y_1) = \varphi(y)$. The final two paragraphs of the proof of Lemma~\ref{Lem: Diameter bound}  now apply \textit{mutatis mutandis} to show that $r_y' = \diam(x) = R$. By Lemma~\ref{Lem: Small Root Bound}, this means
	\[
		\mathfrak{t}_\varphi(x) = \log_{q_k} \frac{\diam(x)}{r_y'} = 0. 
	\]
	
	For $x \not\in \Hull(\Crit(\varphi)) \cup \Ram_\varphi$, choose a direction $\vec{v} \in T_x$ such that $\BB_x(\vec{v})^- \cap \Ram_\varphi = \emptyset$ and write $x = \zeta_{y, R}$ for some $y \in \BB_x(\vec{v})^-$ and $R > 0$. Note that $\vec{v} \not\in E_{\varphi,x}'$. As the ramification locus is closed, there exists $\varepsilon > 0$ such that $\DD(y, R + \varepsilon)^-$ does not intersect $\Ram_\varphi$. So $\varphi$ is injective on $\DD(y, R + \varepsilon)^-$ \cite[Prop.~3.18]{Faber_Berk_RamI_2011}. In the notation of the previous subsections, $A_\varphi(z,y)$ has no root in the disk $D(0, R + \varepsilon)^-$, and so $r_y' \geq R + \varepsilon$. Equivalently, by Lemma~\ref{Lem: Small Root Bound} we see that
	\[
		\mathfrak{t}_\varphi(x) = \log_{q_k} \frac{R}{r_y'} < \log_{q_k} \frac{R + \varepsilon}{r_y'} \leq 0.
	\]
Hence $\mathfrak{t}_\varphi(x) < 0$, as desired.
\end{proof}



\section{Proof of Theorem~D and the Applications}
\label{Sec: Proofs}

	Theorem~D depends heavily on a non-Archimedean version of Rolle's Theorem that is valid for series converging on a suitably large disk. For the reader's convenience, we give its proof, which is due to Robert in the case  $k = \CC_p$.
		
\begin{prop}[{\cite[\S2.4]{Robert_p-adic_Book_2000}}]
\label{Prop: Second Mapping Lemma}
	Suppose $k$ has characteristic zero and residue characteristic $p > 0$. 
	Let $\varphi \in k(z)$ be a nonconstant rational function satisfying the following three hypotheses: 
		\begin{itemize}
			\item $0 < p \leq \deg(\varphi)$;
			\item $\varphi$ has no pole in the classical disk $D(0,1)^-$; and
			\item $\varphi$ has no critical point in the classical disk $D(0,1)^-$.
		\end{itemize}
Then $\varphi$ is injective on the disk $D(0,r_p)^-$, where $r_p = |p|^{1/(p-1)} < 1$.
\end{prop}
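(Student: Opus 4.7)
The plan is to expand $\varphi(z) - \varphi(w)$ as a Taylor series in $t = z - w$ around a point $w \in D(0,1)^-$ and show that when $|t| < r_p$, the linear term $\varphi'(w)\, t$ strictly dominates all higher-order terms in the ultrametric sense. After normalizing so that $\varphi(0) = 0$ (which preserves all hypotheses and does not affect injectivity), the absence of poles in $D(0,1)^-$ means $\varphi$ admits a power-series expansion $\varphi(z) = \sum_{n \geq 1} a_n z^n$ valid on this disk, and therefore
\[
\varphi(z+w) - \varphi(w) \;=\; \sum_{k \geq 1} b_k(w)\, z^k, \qquad b_k(w) \;=\; \sum_{n \geq k} \binom{n}{k} a_n w^{n-k},
\]
for $z, w \in D(0,1)^-$. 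Injectivity on $D(0, r_p)^-$ will follow once we show $|b_k(w) t^k| < |b_1(w) t|$ for every $k \geq 2$ and every $t$ with $0 < |t| < r_p$.

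The first key ingredient is that $\varphi'(z) = \sum_{n \geq 1} n a_n z^{n-1}$ is a power series with no zero in $D(0,1)^-$. A Newton polygon analysis then forces $|n a_n| \leq |a_1|$ for every $n \geq 1$, whence $|\varphi'(w)| = |a_1|$ throughout $D(0,1)^-$; in particular $a_1 = \varphi'(0) \neq 0$. Combining this with the identity $k\binom{n}{k} = n\binom{n-1}{k-1}$ and the fact that the integer $\binom{n-1}{k-1}$ has absolute value $\leq 1$, one extracts the uniform bound
\[
|b_k(w)| \;\leq\; \frac{|a_1|}{|k|} \qquad \text{for all } k \geq 1 \text{ and } w \in D(0,1)^-.
\]

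The second key ingredient is the arithmetic inequality $v_p(k) \leq (k-1)/(p-1)$ for $k \geq 1$, which follows from $k \geq p^{v_p(k)}$ together with the elementary estimate $(p^m - 1)/(p-1) = 1 + p + \cdots + p^{m-1} \geq m$. This translates to $|k|^{-1} \leq |p|^{-(k-1)/(p-1)} = r_p^{-(k-1)}$. Combined with the previous bound, for $0 < |t| < r_p$ and $k \geq 2$,
\[
|b_k(w)\, t^k| \;\leq\; |a_1|\, r_p^{-(k-1)} |t|^k \;=\; |a_1 t|\cdot (|t|/r_p)^{k-1} \;<\; |a_1 t| \;=\; |b_1(w)\, t|.
\]
The ultrametric triangle inequality then yields $|\varphi(z+w) - \varphi(w)| = |a_1||t|$, which is nonzero whenever $t \neq 0$, establishing injectivity.

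The principal obstacle lies in pairing the two estimates above cleanly enough to extract the sharp constant $r_p$: weakening either the Newton-polygon bound on $|n a_n|$ or the arithmetic bound on $v_p(k)$ produces a strictly larger injectivity radius and breaks the match with the ramification computations elsewhere in the paper. Both bounds are tight — the Newton-polygon bound is saturated on linear and monomial factors, and the arithmetic bound at $k = p$ — and these two extremals collide precisely in the model example $(1+z)^p$, whose $k = p$ Taylor term becomes comparable to its $k = 1$ term exactly at $|t| = r_p$, confirming that the stated radius cannot be improved.
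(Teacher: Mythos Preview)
Your proof is correct and rests on the same two ingredients as the paper's argument: the Newton-polygon consequence $|n a_n| \leq |a_1|$ coming from the absence of critical points in $D(0,1)^-$, and the arithmetic bound $v_p(k) \leq (k-1)/(p-1)$. The packaging, however, is genuinely different. The paper works only at the origin: it shows that the first nonvertical face of the Newton polygon of $\varphi$ itself has slope at least $-1/(p-1)$, concludes that $\varphi$ has exactly one root in $D(0,r_p)^-$, and then invokes the fact that an analytic map with no pole on a disk is everywhere $n$-to-$1$ to upgrade ``one root'' to ``injective.'' You instead expand around an arbitrary base point $w$, use the identity $k\binom{n}{k} = n\binom{n-1}{k-1}$ to convert the bound on $|n a_n|$ into the uniform Taylor-coefficient bound $|b_k(w)| \leq |a_1|/|k|$, and then prove injectivity directly by ultrametric domination of the linear term. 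Your route is more self-contained in that it avoids the $n$-to-$1$ lemma; the paper's route is more economical in that it never leaves $w=0$ and never needs the binomial identity.
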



\begin{proof}
As $\varphi$ has no pole in $D(0,1)^-$, it can be expanded as a series $\varphi(z) = a_0 + a_1 z + a_2 z^2  + \cdots$ that converges  on $D(0,1)^-$. After a change of coordinate on the target, we may further assume that $\varphi(0) = a_0 = 0$. 

    Note $a_1 \neq 0$, else the origin is a critical point. If $a_m = 0$ for all $m \geq 2$, then $\varphi(z) = a_1z$ is evidently injective. So let us suppose from now on that there exists $m \geq 2$ with $a_m \neq 0$. Since $\varphi'(z) = a_1 + 2a_2 z + 3a_3z^2 + \cdots$ has no zero in $D(0, 1)^-$, the first segment of its Newton polygon must have nonnegative slope. The Newton polygon is a lower convex hull, so the slope of the segment from $(0, \ord_k(a_1))$ to $(m-1, \ord_k(ma_m))$ will be bounded below by the slope of the first face, which is to say that
	\[
		\frac{\ord_k(ma_m) - \ord_k(a_1)}{m - 1} = \frac{\ord_k(a_m) - \ord_k(a_1)	}{m-1} + \frac{\ord_p(m)}{m-1} \geq 0.
	\]
It follows that
	\be
	\label{Eq: Segment slope}
		\frac{\ord_k(a_m) - \ord_k(a_1)	}{m-1}  \geq  \frac{-\ord_p(m)}{m-1}.
	\ee
	
	Write $\mu$ for the slope of the segment from $(1, \ord_k(a_1))$ to $(m, \ord_k(a_m))$. If $p \nmid m$, then \eqref{Eq: Segment slope} shows that $\mu \geq 0$. Otherwise, write $m = ep^s$ for some positive integers $e, s$ with $e$ coprime to $p$. Then
	\benn
		\ba
		\frac{\ord_k(a_m) - \ord_k(a_1)	}{m-1} \geq \frac{-s}{ep^s - 1} \geq \frac{-s}{p^s - 1} &= \frac{-s}{(p-1)(p^{s-1} + \cdots + p + 1)} \\
		&\geq \frac{-s}{(p-1)(1 + \cdots + 1)} = \frac{-1}{p-1}.
		\ea
	\eenn
If we choose $m$ so that $\mu$ is the slope of the first non-vertical face of the Newton polygon of $\varphi$, then we have proved that $\mu \geq -1 / (p-1)$. Therefore $\varphi$ has exactly one root in the disk $D(0, r_p)^-$. Since an analytic map is everywhere $n$-to-1 on a disk on which it has no pole (for some $n \geq 1$), we see that $\varphi$ is injective on the disk $D(0, r_p)^-$.
\end{proof}

\begin{proof}[Proof of Theorem~D]
	By Proposition~\ref{Prop: Ramification Bounded by Tau} and the fact that $\tau_\varphi$ is continuous, it suffices to prove that 
	\[
		\tau_\varphi(x) \begin{cases} 
			= 0 & \text{ if $p = 0$ or  $p > \deg(\varphi)$} \\
			\leq \frac{1}{p-1} & \text{ if $0 < p \leq \deg(\varphi)$}.
		\end{cases}
	\]

	First suppose that $p = 0$ or $p > \deg(\varphi)$. Then $m_\varphi(x) \leq \deg(\varphi)$ cannot be divisible by $p$, and so $\varphi$ must have separable reduction at $x$. Thus $\tau_\varphi(x) = 0$ (Proposition~\ref{Prop: Tau properties}). 
	
	Now suppose that $0 < p \leq \deg(\varphi)$. Let $x \in \Hull(\Crit(\varphi))$ be a type~II point, and let $\vec{v} \in T_x \smallsetminus E_{\varphi,x}'$ be a tangent direction such that $\varphi(\BB_x(\vec{v})^-) \neq \Berk$. Evidently such a direction must exist since otherwise $\varphi$ would have infinitely many poles. If $\tau_\varphi(x) = 0$, then we are finished. Otherwise, $\tau_\varphi(x) > 0$ and so $U \cap \Ram_\varphi$ is nonempty, where we write $U = \BB_x(\vec{v})^-$. If $\tau_\varphi(x) > 1 / (p-1)$, then there exists $\zeta \in U \cap \Ram_\varphi$ such that $\rho(\zeta, x) > 1 / (p-1)$. Since $U \cap \Ram_\varphi$ is connected \cite[Prop.~6.6]{Faber_Berk_RamI_2011}, we may assume without loss that $\zeta$ is of type~II. After a change of coordinates on the source and target, we may assume that $U = \DD(0,1)^- = \varphi(U)$ and that $\zeta = \zeta_{0, r}$ for some $0 < r < r_p$, where $r_p = |p|^{1 / (p-1)}$. By construction, $D(0,1)^-$ contains no critical point of $\varphi$ and no pole of $\varphi$, and so Proposition~\ref{Prop: Second Mapping Lemma} implies that $\varphi$ is injective on the disk $D(0, r_p)^-$. Since $\zeta \in \DD(0, r_p)^-$, we have obtained a contradiction. 
\end{proof}

	The two applications stated in the introduction have identical conclusions. This allows us to prove the contrapositive of both simultaneously: if $\varphi$ has no critical point in the closed disk $D(a, r \cdot \gamma_{p,d})$, then $\varphi$ is injective on the disk $D(a,r)$ and $\varphi\left(D(a,r)\right) \neq \PP^1(k)$. (Recall that  $\gamma_{p,d} = 1$ if $p = 0$ or $p > d$ and $\gamma_{p,d} = |p|^{-1/ (p-1)} > 1$ otherwise.)

\begin{proof}[Proof of the Applications]
	We begin by reducing to the case $D(a,r) = D(0,1)$. Choose $t \in k$ such that  $|t| \geq r$, and consider the rational function $\psi(z) = \varphi(tz + a)$. Then $\psi$ satisfies the hypotheses of the applications relative to the disk $D(0,1)$. If the applications are true for $\psi$, then there exists a critical point of $\psi$ in the disk $D(0, \gamma_{p,d})$. Equivalently, there exists a critical point of $\varphi$ in the disk $D(a, |t| \cdot \gamma_{p,d})$. Now let $|t| \to r$; as $\varphi$ has only finitely many critical points, the conclusion must continue to hold for $D(a, r \cdot \gamma_{p,d})$. 
    
	Now we prove the contrapositive when $D(a,r) = D(0,1)$. Suppose that $\varphi$ has no critical point in the closed disk $D(0, \gamma_{p,d})$. Then the closed Berkovich disk $\DD(0, \gamma_{p,d})$ is disjoint from the connected hull of the critical points. Let $U$ be the connected component of $\Berk \smallsetminus \Hull(\Crit(\varphi))$ containing $\DD(0, \gamma_{p,d})$, and write $U = \BB_x(\vec{v})^-$ for some type~II point $x \in \Hull(\Crit(\varphi))$ and $\vec{v} \in T_x \smallsetminus E_{\varphi,x}$. 	Then
	\benn
		\ba
			\rho(x, \zeta_{0,1}) &= \rho(x, \zeta_{0, \gamma_{p,d}}) + \rho(\zeta_{0, \gamma_{p,d}}, \zeta_{0,1}) \\
				&>   \log_{q_k} \gamma_{p,d} \\
				 &= \begin{cases}
				 		0 & \text{ if $p = 0$ or $p > d$} \\
						\frac{1}{p-1} & \text{ if $0 < p \leq d$}.
				 	\end{cases}
		\ea
	\eenn
By Theorem~D, $\zeta_{0,1}$  does not lie in the ramification locus. It follows that the closed disk $\DD(0,1)$ is disjoint from the ramification locus \cite[Thm.~A]{Faber_Berk_RamI_2011}. 

	Now we observe that, since $\Ram_\varphi$ is closed, there exists $\varepsilon > 0$ such that $\DD(0, 1 + \varepsilon)^-$ is disjoint from the ramification locus. Hence $\varphi$ is injective on $\DD(0, 1 + \varepsilon)^-$ \cite[Cor.~3.8]{Faber_Berk_RamI_2011}. It follows that $\varphi$ is injective on $\DD(0,1)$, and so also on the classical disk $D(0,1)$. In particular, $\varphi$ can have at most one zero in this disk, which completes the proof of Application~\ref{App: Rolle's Theorem}. 
	 
	Since $\DD(0, 1+\varepsilon)^-$ is disjoint from the ramification locus, we see that $m_{\varphi}(\zeta_{0, 1 + \varepsilon}, \vec{0}) = 1$, where $\vec{0}$ denotes the tangent direction pointing toward $0$. Then $\varphi(\DD(0, 1 + \varepsilon)^-) = \Berk$ if and only if the surplus multiplicity $s_\varphi(\DD(0,  1 + \varepsilon)^-)$ is positive, which is equivalent  to the existence of a critical point in $\DD(0, 1 + \varepsilon)^-$ \cite[Prop.~3.18]{Faber_Berk_RamI_2011}. But $\DD(0,1+\varepsilon)^-$ is disjoint from the ramification locus, and therefore $\varphi(\DD(0,1)) \neq \Berk$. Restricting to classical points, we see that $\varphi(D(0,1)) \neq \PP^1(k)$, and the proof of Application~\ref{App: Exploding Disk} is finished. 
\end{proof}


\section{Behavior Near a Critical Point}
\label{Sec: Local Fuzz}
	
	The shape of the ramification locus in a weak neighborhood of a critical point depends only on its multiplicity and on the characteristic of the residue field of $k$. This is the essence of Theorem~F from the introduction, whose proof occupies most of this section. The proof of Theorem~E appears at the end.

	Suppose first that $\varphi$ has only one critical point. Then the Hurwitz formula shows $k$ must have characteristic~$p$ and $p \mid m_\varphi(c)$ \cite[Cor.~2.5]{Faber_Berk_RamI_2011}. On one hand, this means $\Hull(\Crit(\varphi)) = \{c\}$, and hence $\Hull(\Crit(\varphi)) + r = \{c\}$ for each $r > 0$ since $c$ lies at infinite distance from $\Berk \smallsetminus \PP^1(k)$. On the other hand, we know that the ramification locus has no isolated point for the weak topology \cite[Cor.~3.13]{Faber_Berk_RamI_2011}. So $\Ram_\varphi \neq \{c\}$, and the proof is complete in this special case.

	For the remainder of this section we will assume that $\varphi$ has at least two distinct critical points. After a suitable change of coordinate on the source and target, we may assume
	\begin{itemize}
		\item $c = 0 = \varphi(0)$;
		\item $\varphi$ has no pole in $D(0,1)$;
		\item $\varphi$ has no critical point in $D(0,1)$ other than $c = 0$; and 
		\item $m_\varphi(\zeta_{0,r}) = m_\varphi(0)$ for all $r \in [0,1]$.
	\end{itemize}
The penultimate requirement is possible because $\varphi$ has only finitely many critical points, which in turn is true because $\varphi$ is separable. The final assertion is possible by \cite[Prop.~3.9, 3.12]{Faber_Berk_RamI_2011}. Let $m = m_\varphi(0) > 1$, and let $m' \geq m$ be chosen so that $m' - 1$ is the weight of the critical point~0. (Recall that the weight of $0$ is defined to be $\ord_{z=0}(\varphi')$.)

	After replacing $\varphi$ by a suitable scalar multiple, it can be represented on the disk $D(0,1)$ by the series:
		\[
			f(z) = z^m \ ( 1  + \varepsilon(z)), \qquad  \varepsilon(z) = \varepsilon_1z + \varepsilon_2 z^2 + \cdots,
		\]
where $|\varepsilon_i| < 1$ for $i = 1, 2, \ldots$. 
Therefore
	\benn
		f'(z) = z^{m-1} \ \left(m + m \varepsilon(z) + z\varepsilon'(z)\right)
	\eenn
	If $\mathrm{char}(k) = 0$ or if $\mathrm{char}(k) = p > 0$ and $p \nmid m$, then $\ord_{z=0} f'(z) = m - 1$. If $\mathrm{char}(k) = p > 0$ and $p \mid m$, then the definition of $m'$ shows
	\[
		f'(z) = z^m \varepsilon'(z) = z^{m' - 1} \ \left( m' \varepsilon_{m' - m} + (m'+1) \varepsilon_{m'-m+1}z + \cdots \right),
	\]
where $\varepsilon_{m' - m} \neq 0$ and $p \nmid m'$. (Note $\varepsilon'(z) \not\equiv 0$ by separability.) Moreover, since $\varphi$ has no other critical point in $D(0,1)$, it follows that $f'(z) / z^{m' - 1}$ is invertible on this disk; that is, $|\ell \varepsilon_\ell| < | m' \varepsilon_{m' - m}|$ for all $\ell > m' - m$.

	Our normalizations thus far show that
	 \[
	 	[0, \zeta_{0,1}) = \Hull(\Crit(\varphi)) \cap \DD(0,1)^- \subset \Ram_\varphi \cap \DD(0,1)^-,
	\]
because $\varphi$ must have at least one critical point outside the disk $\DD(0,1)^-$. It remains to determine the remaining ramified points in $\DD(0,1)^-$.

	Here is the general strategy for the remainder of the proof. Choose $\delta \in k^{\circ \circ}$. The function $F(z) = f(z + \delta) - f(\delta)$ is a convergent series on $D(0,1)$, and the number of roots of $F$ is $D(0,r)$ agrees with the multiplicity $m_\varphi(\zeta_{\delta, r})$ for $r \leq 1$ (Lemma~\ref{Lem: Multiplicity and zeros}). We will show that the number of roots is~1 unless $r \geq |\delta|$ (resp. $r \geq |p|^{1 / (p-1)} |\delta|$) if $p \nmid m$ (resp. $\mathrm{char}(k) = 0$ and $ p \mid m$). We can use the Newton polygon of $F$ to show the number of roots is as claimed, but this will require a fair amount of calculation. A similar strategy will be used when $\mathrm{char}(k) = p$ and $p \mid m$.


	We first compute $F(z) = f(z+ \delta) - f(\delta)$ for arbitrary $p, m$:
	\benn
		\ba
			F(z) &= (z+\delta)^m - \delta^m 
					+ (z + \delta)^m \ \sum_{i \geq 1} \varepsilon_i (z+\delta)^i 
					- \delta^m \sum_{i \geq 1} \varepsilon_i \delta^i \\
			&= (z + \delta)^m - \delta^m + \sum_{\ell = 0}^m \binom{m}{\ell} \delta^{m- \ell} z^\ell \ 
					\sum_{i \geq 1} \varepsilon_i \sum_{j = 0}^i \binom{i}{j} \delta^{i-j} z^j
					- \delta^m \sum_{i \geq 1} \varepsilon_i \delta^i \\
			&= (z+ \delta)^m - \delta^m + \sum_{t \geq 0} \sum_{\ell = 0}^{\min\{t,m\}} 
					\sum_{i \geq 1} \binom{m}{\ell} \binom{i}{t - \ell} 
					\delta^{m - t + i} \varepsilon_i z^t - \delta^m \sum_{i \geq 1} \varepsilon_i \delta^i,
		\ea
	\eenn
where we have set $t = j + \ell$ and used the fact that $\binom{i}{t-\ell} = 0$ whenever $t- \ell > i$. Now observe that the $t = 0$ term in the sum cancels with the final summation. To simplify this expression further, let us write  $O(z^{m+1})$ to denote an infinite series with order of vanishing at least $m+1$ and all coefficients in $k^{\circ \circ}$. Then we have
	\benn
		\ba
			F(z) &= (z+ \delta)^m - \delta^m + \sum_{t = 1}^m \sum_{\ell = 0}^t 
					\sum_{i \geq 1} \binom{m}{\ell} \binom{i}{t - \ell} 
					\delta^{m - t + i} \varepsilon_i z^t + O(z^{m+1}) \\
			&= (z+ \delta)^m - \delta^m + \sum_{t =1}^m \sum_{i \geq 1} \binom{m}{t} \delta^{m - t+i} \varepsilon_iz^t \\
			& \qquad + \sum_{t=1}^m \sum_{\ell = 0}^{t-1} \sum_{i \geq 1} 
					\binom{m}{\ell} \binom{i}{t- \ell} \delta^{m-t+i} \varepsilon_i z^t + O(z^{m+1}) \\
			&= \underbrace{(z+\delta)^m - \delta^m}_{(A)} + \underbrace{\varepsilon(\delta) \ \sum_{t=1}^m \binom{m}{t} \delta^{m-t} \  z^t}_{(B)} \\
			& \qquad + \underbrace{\sum_{t=1}^m \delta^{m-t+1} \left[ \sum_{\ell = 0}^{t-1} \sum_{i \geq 1} 
					\binom{m}{\ell} \binom{i}{t- \ell} \delta^{i-1} \varepsilon_i \right]z^t}_{(C)} + O(z^{m+1}).
		\ea
	\eenn	

Write $F(z) = \sum f_t z^t$.  Evidently $f_0 = 0$. For the coefficient $f_t$ with $1 \leq t \leq m$, we see that
	\be
	\label{Eq: Contributions}
		\ba
			\text{(A)} & \text{ contributes } \binom{m}{t}\delta^{m-t}  \\
			\text{(B)} & \text{ contributes }  \binom{m}{t}\delta^{m-t} \varepsilon(\delta) \\
			\text{(C)} & \text{ contributes }  \delta^{m-t+1} \cdot (\text{element in $k^{\circ \circ}$}).
		\ea
	\ee	
We also see that 
	\[
		f_m = 1 + \varepsilon(\delta) + \delta \cdot (\text{element in $k^{\circ \circ}$}) \Rightarrow |f_m| = 1.
	\]
Note that $(m, 0)$ must be a vertex of the Newton polygon of $F$ because all of the coefficients $f_t$ with $t \neq m$ lie in $k^{\circ \circ}$.
Our treatment of the remaining coefficients will depend on which case of the proposition we are trying to prove. 

\subsection{Case $\mathrm{char}(k) = 0$}

	For $1 \leq t < m$, we see that the contribution of (B) is strictly dominated by that from (A). Define $r_0 = \min\left\{ \left|\binom{m}{\ell}\right| \ : \ 0 \leq \ell \leq m \right\}$. If we require that $|\delta| < r_0$, then the contribution from (C) is bounded above by $|\delta| \cdot |\delta^{m-t}| < \left|\binom{m}{t} \delta^{m-t} \right|$. (This restriction has the effect of shrinking the disk $U$ in the theorem.) Hence 
	\[
		|f_t| = \left|\binom{m}{t} \delta^{m-t} \right|, \qquad 1 \leq t \leq m.
	\]

	We now compute the slope of the first non-vertical face of the Newton polygon of $F$. It is given by
	\benn
		\ba
			\min_{2 \leq t \leq m} \frac{\ord_k \left(\binom{m}{t} \delta^{m-t} \right) -  \ord_k \left(\binom{m}{1} \delta^{m-1} \right)}{t - 1} 	
				&= -\ord_k(\delta) + \min_{2 \leq t \leq m} \frac{\ord_p \left( \binom{m}{t} / m \right)}{t-1} \\
				&= -\ord_k(\delta) - \begin{cases}  0 & \text{if $p \nmid m$} \\ \frac{1}{p-1} & \text{if $p \mid m$},
					\end{cases}
		\ea
	\eenn
where the final equality is a consequence of the following lemma, whose proof will be given at the end of this subsection.

\begin{lem}
\label{Lem: Binomial Valuations}
	Let $p$ be a prime and $m \geq 2$ a natural number. Then
		\[
			\min_{2 \leq s \leq m} \frac{\ord_p\left( \binom{m}{s} / m\right)}{s-1} =
				\begin{cases}
					0 & \text{if $p \nmid m$} \\
				 - \frac{1}{p-1} & \text{if $p \mid m$}.
				\end{cases}
		\]
	If $p \nmid m$ (resp. $p \mid m$), then the minimum is attained at $s = m$ (resp. at $s =p$).
\end{lem}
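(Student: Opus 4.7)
The plan is to reduce everything to the elementary identity
\[
    \binom{m}{s}\Big/m = \binom{m-1}{s-1}\Big/s,
\]
so that $\ord_p\!\left(\binom{m}{s}/m\right) = \ord_p\!\binom{m-1}{s-1} - \ord_p(s)$. This immediately handles the case $p \nmid m$: since $\binom{m}{s}/m$ differs from the integer $\binom{m}{s}$ by a $p$-adic unit, its $p$-adic valuation is nonnegative. Thus the ratio $\ord_p(\binom{m}{s}/m)/(s-1)$ is $\geq 0$ for every $s$, and choosing $s = m$ gives $\binom{m}{m}/m = 1/m$, whose valuation is $0$. This attains the minimum.

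For the case $p \mid m$, I would use the same identity to bound below: $\ord_p(\binom{m}{s}/m) \geq -\ord_p(s)$. The key inequality is then
\[
    \ord_p(s) \leq \frac{s-1}{p-1} \qquad (s \geq 1),
\]
which follows from $s \geq p^{\ord_p(s)}$ combined with the standard estimate $p^b \geq 1 + b(p-1)$ (verified by induction, or by noting that $(p^b-1)/(p-1) = 1 + p + \cdots + p^{b-1} \geq b$). Dividing gives $\ord_p(\binom{m}{s}/m)/(s-1) \geq -1/(p-1)$, proving the lower bound.

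It remains to show that $s = p$ attains this bound. At $s = p$ we have $\ord_p(s) = 1$ and $(s-1)/(p-1) = 1$, so equality holds provided $\binom{m-1}{p-1} \not\equiv 0 \pmod{p}$. Since $p \mid m$, the base-$p$ digit of $m-1$ at position $0$ equals $p-1$, so Lucas' theorem gives $\binom{m-1}{p-1} \equiv \binom{p-1}{p-1} = 1 \pmod{p}$, as needed.

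I do not anticipate a serious obstacle: the main observation is the recasting $\binom{m}{s}/m = \binom{m-1}{s-1}/s$, after which the problem decouples into a trivial valuation bound on $\binom{m-1}{s-1}$ and the elementary inequality $\ord_p(s) \leq (s-1)/(p-1)$. The only step requiring a bit of care is verifying via Lucas that the minimum is actually attained (and not merely bounded) when $p \mid m$.
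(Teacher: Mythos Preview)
Your proof is correct. Both you and the paper reduce the case $p\mid m$ to a lower bound plus a check of equality at $s=p$, but the specific tools differ. The paper writes $\binom{m}{s}/m = (m-1)\cdots(m-s+1)/s!$, discards the numerator entirely, and applies Legendre's formula $\ord_p(s!) = (s-M_s)/(p-1)$ together with the trivial bound $(s-M_s)/(s-1) \leq 1$; attainment at $s=p$ then follows immediately since $m-1,\ldots,m-p+1$ are all coprime to $p$ when $p\mid m$. Your rewriting as $\binom{m-1}{s-1}/s$ yields the sharper lower bound $-\ord_p(s)$ (rather than $-\ord_p(s!)$), so you need only the elementary inequality $\ord_p(s)\leq (s-1)/(p-1)$; in exchange, your attainment step invokes Lucas' theorem where the paper's verification is direct. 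Both routes are short; yours trades Legendre's formula for Lucas.
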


    If $p \nmid m$, we have shown that $F$ has a single root in the open disk $D(0, |\delta|)^-$ and $m$ roots in the closed disk $D(0, |\delta|)$. A change of coordinate and an application of Lemma~\ref{Lem: Multiplicity and zeros} shows that
    \[
        m_\varphi(\zeta_{\delta, r}) = \begin{cases} 1 & r < |\delta| \\ m & r = |\delta| \end{cases}.
    \]
Since $\zeta_{0, |\delta|} = \zeta_{\delta, |\delta|}$, we conclude that $\Ram_\varphi \cap \DD(0, r_0)^- = [0, \zeta_{0, r_0})$. 

If $p \mid m$, then $F$ has a single root in the open disk $D(0, |p|^{1/(p-1)} |\delta|)^-$ and $p$ roots in the closed disk $D(0, |p|^{1/(p-1)} |\delta|)$. Hence
    \[
         m_\varphi(\zeta_{\delta, r})  \begin{cases} = 1 & r < |p|^{1/(p-1)} |\delta| \\ \geq p  & |p|^{1/(p-1)} \leq r \leq |\delta|  \end{cases},
    \]        
so as above we see that  
    \[
        \Ram_\varphi \cap \DD(0, r_0)^- = [0, \zeta_{0,r_0}) + \frac{1}{p-1}.
    \]

\begin{proof}[Proof of Lemma~\ref{Lem: Binomial Valuations}]
	If $p \nmid m$, then
		\[
			\ord_p\left( \binom{m}{s} / m\right)  = \ord_p\left( \binom{m}{s}\right) \geq 0
		\]
by integrality of binomial coefficients. Equality holds if $s = m$.

	Now suppose that $p \mid m$. Then
		\benn
			\ba
				\ord_p\left(  \binom{m}{s} / m \right)  &=
					\ord_p \left( \frac{(m-1) \cdots (m - (s-1)) }{s!} \right) \\
					&\geq - \ord_p(s!) \\
					&= - \frac{s - M_s}{p-1}, 
			\ea
		\eenn
where $M_s$ is the sum of the digits of $s$ written in base $p$. (This last formula is quickly established by induction on $s$ \cite[\S5.3.1]{Robert_p-adic_Book_2000}.) Thus
	\benn
		\ba
			\frac{\ord_p\left( \binom{m}{s} / m \right)}{s-1} 	
				&\geq - \frac{1}{p-1} \cdot \frac{s-M_s}{s-1} \\
				&\geq - \frac{1}{p-1}.
		\ea
	\eenn
Setting $s = p$ gives equalities throughout. 
\end{proof}

\subsection{Case $\mathrm{char}(k) = p > 0$ and $p \nmid m$}

	First we look at the linear coefficient $f_1$, for which we see that
	\benn
		\ba
			\text{(A)} & \text{ contributes } m\delta^{m-1}  \\
			\text{(B)} & \text{ contributes }  m\delta^{m-1} \varepsilon(\delta) \\
			\text{(C)} & \text{ contributes }  \delta^m \cdot (\text{element in $k^{\circ \circ}$}).
		\ea
	\eenn
As $|\delta| < 1$, evidently (A) is the dominant contribution, so we have $|f_1| = |\delta^{m-1}|$.

	Now we turn to the coefficient $f_t$ with $1 < t < m$. If $p \mid \binom{m}{t}$, then the contributions from \eqref{Eq: Contributions}(A) and (B) vanish and so $|f_t| < |\delta|^{m-t + 1}$. 
If $p \nmid \binom{m}{t}$, then the contribution from (A) strictly dominates the other two. Hence $|f_t| = |\delta|^{m-t}$ when $p \nmid \binom{m}{t}$. In either case, we have $|f_t| \leq |\delta|^{m-t}$.

	We claim that the first non-vertical face of the Newton polygon of $F$ is the one connecting $(1, \ord_k(f_1))$ to $(m, \ord_k(f_m))$. The slope of the segment from $(1, \ord_k(f_1))$ to $(t, \ord_k(f_t))$ is
	\benn
		\frac{\ord_k(f_t) - \ord_k(f_1)}{t-1} \geq \frac{(m-t) \ord_k(\delta) + (m-1)\ord_k(\delta)}{t-1}
			= -\ord_k(\delta),
	\eenn
with equality if $t = m$. By the same argument we used in the previous case, we see that $[0, \zeta_{0,1}) = \Ram_\varphi \cap \DD(0,1)^-$. 



\subsection{Case $\mathrm{char}(k) = p > 0$ and $p \mid m$}

	Since $m = 0$ in $k$, the linear coefficient of $F(z)$ comes from \eqref{Eq: Contributions}(C):
	\[
		f_1 = \delta^m \sum_{i \geq 1} i \delta^{i-1} \varepsilon_i  = \delta^m \varepsilon'(\delta).
	\]
Since $|\varepsilon'(\delta)| = |\delta^{m' -m- 1} \varepsilon_{m' - m}|$, we see that
	\[
		|f_1| = |\delta^{m'  - 1} \varepsilon_{m' - m}|.
	\]
	
	An upper bound for the slope of the first non-vertical face of the Newton polygon of $F$ is given by the slope of the segment connecting $(1, \ord_k(f_1))$ to $(m, \ord_k(f_m))$:
	\benn
		\ba
		\frac{\ord_k(f_m) - \ord_k(f_1)}{m-1} &= \frac{-m \ord_k(\delta) - \ord_k(\varepsilon'(\delta))}{m-1} \\
			&= - \frac{m'-1}{m-1} \ord_k(\delta) - \frac{\ord_k(\varepsilon_{m' - m})}{m-1} \\
			&<- \frac{m'-1}{m-1} \ord_k(\delta).
		\ea
	\eenn 
Set $r = r(\varphi, \delta) =  |\delta|^{(m' -1) / (m-1)} \cdot |\varepsilon_{m' - m}|^{1 / (m-1)}< |\delta|$. Then $F$ has only 1 root in $D(0,r)^-$ and at least~2 roots in $D(0,r)$. Thus $\zeta_{\delta, r} \in \Ram_\varphi$, while the nearest point on the connected hull of the critical points is $\zeta_{0, |\delta|}$. So we have
    \benn
        \ba
        \rho(\zeta_{\delta, r}, \zeta_{0, |\delta|}) &= \log_{q_k}\left( |\delta|  / r \right) \\
            &= \left[1 - \frac{m' - 1}{m-1} \right] \log_{q_k} |\delta|  - \frac{1}{m-1} \log_{q_k} |\varepsilon_{m' -m}|\\
            &= \frac{m' - m}{m-1} \ord_k(\delta) + \frac{1}{m-1} \ord_k(\varepsilon_{m' - m}). 
        \ea
    \eenn
This last quantity tends to infinity as $\delta \to 0$. It follows that for any $R < 1$, the set $\DD(0,R)^- \cap \Ram_\varphi$ fails to be contained in a close tube about $[0, \zeta_{0, R})$ of finite radius. In fact, Proposition~\ref{Prop: Ramification Bounded by Tau} implies that as $|\delta| \to 0$, 
    \[
        \tau_\varphi(\zeta_{0, |\delta|}) = \frac{m' - m}{m-1} \ord_k(\delta) + \frac{1}{m-1} \ord_k(\varepsilon_{m' - m})  \to +\infty.
    \]


\subsection{Proof of Theorem~E}
\label{Sec: Theorem Proofs}

    The following corollary will be used in the proof of Theorem~E. It follows immediately from Proposition~\ref{Prop: Ramification Bounded by Tau} and the proof of Theorem~F. 

\begin{cor}
\label{Cor: Tau is bounded}
	Under the hypotheses of the proposition, we find that
		\[
			\lim_{\substack{x \to c \\ x \in \Hull(\Crit(\varphi))}} \tau_\varphi(x)  = 
				\begin{cases} 
				0 & \text{if $p \nmid m$} \\
				\frac{1}{p-1} & \text{if $\mathrm{char}(k) = 0$ and $p \mid m$} \\
				+\infty & \text{if $\mathrm{char}(k) = p$ and $p \mid m$}.
			\end{cases}
		\]
\end{cor}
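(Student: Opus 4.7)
The plan is to read off the three limit values directly from the detailed analysis in the proof of Theorem~F, using Proposition~\ref{Prop: Ramification Bounded by Tau} as the bridge between the shape of $\Ram_\varphi$ near $c$ and the value of $\tau_\varphi$. I keep the normalizations from that proof: $c = 0$, $\varphi(0) = 0$, $f(z) = z^m(1+\varepsilon(z))$ on $D(0,1)$, and no other critical point in $D(0,1)$. Points of $\Hull(\Crit(\varphi))$ close to $c$ have the form $x = \zeta_{0,|\delta|}$ with small $\delta \in k^{\circ\circ}$, and the tangent direction $\vec{v} \in T_x$ pointing toward $\delta$ satisfies $\BB_x(\vec{v})^- = \DD(\delta, |\delta|)^-$. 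For $|\delta|$ smaller than the $r_0$ from the proof of Theorem~F, this disk contains neither a critical point nor a pole of $\varphi$, and for generic $\delta$ the direction $\vec{v}$ lies in $T_x \smallsetminus E_{\varphi,x}'$; so Proposition~\ref{Prop: Ramification Bounded by Tau} is applicable and yields the exact value of $\tau_\varphi(x)$.

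In the two tame cases ($p \nmid m$, or $\mathrm{char}(k) = 0$ with $p \mid m$), Theorem~F supplies an open Berkovich disk $U$ about $c$ inside which $\Ram_\varphi = U \cap [\Hull(\Crit(\varphi)) + r]$, where $r = 0$ or $r = 1/(p-1)$ respectively. For any hull point $x \in U$ sufficiently close to $c$, the maximum $\rho$-distance from $x$ to a ramification point in $\BB_x(\vec{v})^-$ is then exactly $r$: it is $0$ when $r = 0$ because $\BB_x(\vec{v})^- \cap \Ram_\varphi$ is empty, and when $r = 1/(p-1)$ it is attained at the point $\zeta_{\delta, |p|^{1/(p-1)}|\delta|}$, which the Newton-polygon computation in the proof of Theorem~F forces into $\Ram_\varphi$ via $m_\varphi(\zeta_{\delta, s}) \geq p$ for $|p|^{1/(p-1)}|\delta| \leq s \leq |\delta|$. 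Since $\vec{v} \notin E_{\varphi,x}'$, Proposition~\ref{Prop: Ramification Bounded by Tau} converts this into the equality $\tau_\varphi(x) = r$, which persists in the limit $x \to c$ along $\Hull(\Crit(\varphi))$.

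In the wild case $\mathrm{char}(k) = p$ with $p \mid m$, the definition of $m'$ forces $m' > m$ (since $p \mid m$ implies $\ord_{z=0}f'(z) \geq m$), and the Newton-polygon computation in the proof of Theorem~F exhibits a type~II ramification point $\zeta_{\delta, r(\varphi, \delta)}$ lying in $\BB_x(\vec{v})^-$ at distance
\[
    \rho\bigl(\zeta_{\delta, r(\varphi, \delta)},\, \zeta_{0,|\delta|}\bigr) = \frac{m'-m}{m-1}\,\ord_k(\delta) + \frac{1}{m-1}\,\ord_k(\varepsilon_{m'-m})
\]
from $x$. Because $m' > m$, this tends to $+\infty$ as $\delta \to 0$; Proposition~\ref{Prop: Ramification Bounded by Tau} then forces $\tau_\varphi(\zeta_{0,|\delta|})$ to satisfy the same lower bound, so $\tau_\varphi \to +\infty$ along $\Hull(\Crit(\varphi))$ as one approaches $c$. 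The only bookkeeping beyond the proof of Theorem~F is to verify that the chosen $\vec{v}$ avoids the finite set $E_{\varphi,x}'$; this is immediate since $E_{\varphi,x}'$ is finite while $\delta$ may be chosen generically in $k^{\circ\circ}$, so I expect no genuine obstacle.
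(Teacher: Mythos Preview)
Your proposal is correct and takes essentially the same approach as the paper, whose entire proof is the sentence ``It follows immediately from Proposition~\ref{Prop: Ramification Bounded by Tau} and the proof of Theorem~F''; you have simply unpacked that sentence. The only detail worth noting is that your argument literally computes $\tau_\varphi$ only at type~II hull points $\zeta_{0,|\delta|}$, but strong continuity of $\tau_\varphi$ (Proposition~\ref{Prop: Tau properties}\eqref{Item: Strong Continuous}) and density of type~II points handle the remaining hull points, as the paper also tacitly assumes.
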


\begin{remark}
    The limit in the corollary is the same for both the weak and strong topologies on $\HH$. 
\end{remark}

	The proof of Theorem~E is now an easy consequence of the machinery we have developed. 
		
\begin{proof}[Proof of Theorem~E]
	Suppose first that $\varphi$ is tamely ramified at each of its critical points. Then it has at least two critical points, and without loss of generality we may assume one of them is~$\infty$. The visible ramification $\tau_\varphi$ is continuous (Proposition~\ref{Prop: Tau properties}) and constant in a neighborhood of each of the critical points of $\varphi$ (Corollary~\ref{Cor: Tau is bounded}). So $\tau_\varphi$ has a maximum value; denote it by $r_\varphi$. If $\zeta \in \Ram_\varphi \smallsetminus \Hull(\Crit(\varphi))$, then there exists a type~II point $x \in \Hull(\Crit(\varphi))$ and a direction $\vec{v} \in T_x \smallsetminus E_{\varphi,x}$ such that $\zeta \in \BB_x(\vec{v})^-$. Proposition~\ref{Prop: Ramification Bounded by Tau} shows that $\rho(\zeta, x) \leq \tau_\varphi(x) \leq r_\varphi$. Hence $\Ram_\varphi \subset \Hull(\Crit(\varphi)) + r_\varphi$, as desired.
	
	Now suppose that $k$ has positive characteristic $p$, and that there is a critical point $c$ such that $p \mid m_\varphi(c)$. Hence $\tau_\varphi(x) \to +\infty$ as $x$ approaches $c$  (Corollary~\ref{Cor: Tau is bounded}). Proposition~\ref{Prop: Ramification Bounded by Tau} shows that for each $x$ of type~II sufficiently close to $c$ there exists a point $\zeta \in \Ram_\varphi$ for which $\rho(\zeta, x) = \tau_\varphi(x)$, and hence there cannot exist a real number $r_\varphi$ for which $\Ram_\varphi \subset \Hull(\Crit(\varphi)) + r_\varphi$.
\end{proof}


\section{Examples}
\label{Sec: Examples}

	Here we give three examples. The first two show that the bounds in Theorems~D and~E cannot be made any stronger. The third illustrates some of the subtlety in the visible ramification. 
	
\begin{example}
\label{Ex: Sharp bound}
	First we show that the bound $1 / (p-1)$ in Theorem~D is the best possible in general when $0 < \deg(\varphi) \leq p$. Suppose that $k$ has characteristic zero and residue characteristic $p > 0$, and consider the polynomial function
	\[
		\varphi(z) = z^p + z.
	\]
The Newton polygon shows that $\varphi$ has a single zero in the open unit disk $D(0, 1)^-$, and it has $p$ zeros in the closed unit disk $D(0,1)$. So the Gauss point $\zeta_{0,1}$ lies in the Berkovich ramification locus (Lemma~\ref{Lem: Multiplicity and zeros}). The Newton polygon of the derivative of $\varphi$ shows that the finite critical points have absolute value $|p|^{-1 / (p-1)}$. Therefore
	\[
		\rho(\Hull(\Crit(\varphi)), \zeta_{0,1}) = \rho(\zeta_{0, |p|^{-1/(p-1)}}, \zeta_{0,1}) = \frac{1}{p-1},
	\]
which shows Theorem~D is sharp when $\deg(\varphi) = p$. A similar argument applies to the function $\varphi(z) = az^d + z^p + z$ when $d > p$ and $a \in k$ has sufficiently small absolute value (depending on $d$). 

    Note that this example also shows that the bound appearing in Application~\ref{App: Rolle's Theorem} is sharp. 
\end{example}

\begin{example}
\label{Ex: E not uniform}
	Next we give an example to show that one cannot strengthen Theorem~E: when $k$ has positive characteristic $p$,  the radius $r_\varphi$ can be arbitrarily large as one varies the function $\varphi$.  Let $\psi \in k(z)$ be any rational function of degree at least~2 that is tamely ramified at each of its critical points, and suppose further that there exists a critical point with multiplicity strictly greater than~$p$. After a change of coordinate on the source, we may assume that the smallest critical point has absolute value~1 and that $\infty$ has multiplicity larger than~$p$. A change of coordinate on the target ensures that $\psi(0) = 0$, $\psi'(0) = 1$, and $\psi(\infty) = \infty$. (For example, take $p > 2$ and $\psi(z) = z^d + z$ with $d > p$ and $d \not\equiv 0 \pmod p$.) 
	
	Let $\alpha \in k$ be an undetermined element of $k$, and define
	\[
		\varphi(z) = \psi(z) + \alpha z^p.
	\]
By construction, $\varphi$ and $\psi$ have the same critical points with the same multiplicities. In a neighborhood of the origin, we may expand $\psi(z) = z + a_2z^2 + a_3z^3 + \cdots$, so that
	\[
		\varphi(z) = z + a_2z^2 + a_3z^3 + \cdots + (\alpha + a_p)z^p + \cdots
	\]
If $|\alpha|$ is sufficiently large (depending on $\psi$), the first non-vertical face of the Newton polygon of this series has vertices $(1, 0)$ and $(p, \ord_k(\alpha))$, which shows that $\varphi$ has $p$~zeros in the closed disk of radius $r(\alpha) = |\alpha|^{-1 / (p-1)}$. Hence $\zeta_{0, r(\alpha)} \in \Ram_\varphi$ (Lemma~\ref{Lem: Multiplicity and zeros}). By construction, $\zeta_{0,1} \in \Hull(\Crit(\varphi))$, while $\varphi$ has no critical point in the disk $\DD(0,1)^-$. With the notation of Theorem~E, we find that 
	\[
		r_\varphi \geq \max_{\zeta \in \DD(0,1)^- \cap \Ram_\varphi} \rho(\Hull(\Crit(\varphi)), \zeta)
			\geq \rho(\zeta_{0,1}, \zeta_{0, r(\alpha)}) = - \log r(\alpha).
	\]
Since $r(\alpha) \to 0$ as $|\alpha| \to +\infty$, we conclude that $r_\varphi$ cannot be uniformly bounded as one varies the function $\varphi$. 
\end{example}

\begin{example}
\label{Ex: Pathology}
	Finally, we give an example to illustrate that the inequality in Proposition~\ref{Prop: Ramification Bounded by Tau} can be strict. Suppose $k$ has characteristic zero and residue characteristic $p > 0$. Consider the rational function
		\[
			\varphi(z) = \frac{z^{p+1} + p}{z}.
		\]
Then
	\[
		A_\varphi(z,w) =  \sum A_\ell(w) z^\ell =   p(w^{p+1} - 1) + \sum_{1 \leq \ell \leq p} \binom{p+1}{\ell+1}w^{p+1 - \ell} z^\ell.
	\]
The critical points of $\varphi$ are $\Crit(\varphi) = \{\infty, 1, \zeta_{p+1}, \ldots, \zeta_{p+1}^p\}$, where $\zeta_{p+1}$ is a primitive $(p+1)^{\mathrm{st}}$ root of unity. 
Then the disk $\DD(0,1)^-$ does not intersect the connected hull of the critical points. 


For $x = \zeta_{0,1}$, we see that $\vec{0} \in E_{\varphi,x}' \smallsetminus E_{\varphi, x}$ since $0$ is a root of $A_\ell(w)$ for $\ell = 1, \ldots, p$. The reduction of $\varphi$ is $\widetilde{\varphi}(z) = z^p$, so that $\zeta_{0,1} \in \Ram_\varphi$. Hence
	\begin{equation*}
			\tau_\varphi(\zeta_{0,1})  
				= \max_{\ell \geq 1} \log_{q_k} \left| \frac{1}{p}\binom{p+1}{\ell + 1}\right|^{1 / \ell} 
				= \frac{1}{p-1}.
	\end{equation*}
	
We claim that 
	\begin{equation}
	\label{Eq: tau comparison}
		\max_{\zeta \in \DD(0,1)^- \cap \Ram_\varphi} \rho(\zeta, \zeta_{0,1}) = \frac{1}{p+1} < \tau_\varphi(\zeta_{0,1}).
	\end{equation}
 To see it, we use the function $\mathfrak{t}_\varphi$ defined in \S\ref{Sec: Alternate}. Any $\zeta \in \HH \cap \DD(0,1)^-$ of type~II may be written as $\zeta = \zeta_{a,r}$ with $r = |a|$ for some $a \in k$. One calculates that
	\begin{align*}
			\mathfrak{t}_\varphi(\zeta_{a,r}) &= \log_{q_k} r + \log_{q_k} \max_{1 \leq \ell \leq p}
				\left| \frac{A_\ell}{A_0}(\zeta_{a,r}) \right|^{1 / \ell} \\
				&= \log_{q_k} r + \log_{q_k} \max\left\{ |p|^{-1/(p-1)} \cdot r^{2 / (p-1)}, 
						 |p|^{-1/p} \cdot r^{1 / p} \right\} \\
				&=\begin{cases}
							 \log_{q_k}  |p|^{-1/(p-1)} \cdot r^{(p+1) / (p-1)} & \text{if } r \geq |p|^{1 / (p+1)} \\
							 \log_{q_k}  |p|^{-1/p} \cdot r^{(p+1)/p} & \text{if } r < |p|^{1 / (p+1)}
					\end{cases} \\
				& \begin{cases}
						> 0 & \text{if } r > |p|^{1 / (p+1)} \\
						= 0 & \text{if } r = |p|^{1 / (p+1)} \\
						< 0 & \text{if } r < |p|^{1 / (p+1)}. 
					\end{cases}
	\end{align*}
By Propositions~\ref{Prop: Tau properties}\eqref{Item: Vanishing} and~\ref{Prop: Alternate}, we see that 
	\[
		\DD(0,1)^- \cap \Ram_\varphi = \{\zeta \in \DD(0,1)^-:  |p|^{1 / (p+1)} \leq \diam(\zeta)  < 1\},
	\]
which implies~\eqref{Eq: tau comparison} holds.

\end{example} 

\medskip
	
\noindent \textbf{Acknowledgments}
This work was made possible by a National Science Foundation Postdoctoral Research Fellowship. Many thanks go to Bob Rumely for his enthusiasm during the discovery of these results, and for his insightful comments on an earlier draft of this manuscript. The anonymous referee also deserves acknowledgement for several suggested improvements to the exposition. 


\bibliographystyle{plain}
\bibliography{xander_bib}

\end{document}